\renewcommand{\baselinestretch}{\baselinestretch}
\renewcommand{\baselinestretch}{1.1}
\numberwithin{equation}{section}
\newtheorem{thm}{Theorem}[section]
\newtheorem{lem}[thm]{Lemma}
\newtheorem{cor}[thm]{Corollary}
\theoremstyle{definition}
\newtheorem{defn}[thm]{Definition}
\theoremstyle{remark}
\newtheorem{rmk}[thm]{Remark}
\numberwithin{equation}{section}
\newcommand{\ra}{{\rightarrow}}
\newcommand{\gen}{\text{gen}}
\newcommand{\ord}{\text{ord}}
\newcommand{\z}{{\mathbb Z}}
\newcommand{\q}{{\mathbb Q}}
\newcommand{\e}{{\epsilon}}
\begin{document}
\title[Quadratic forms with a strong regularity property]{Quadratic forms with a strong regularity property on the representations of squares}

\author{Kyoungmin Kim and Byeong-Kweon Oh}

\address{Department of Mathematics, Sungkyunkwan University, Suwon 16419, korea}
\email{kiny30@skku.edu}
\thanks{This work of the first author was supported by the National Research Foundation of Korea(NRF) grant funded by the Korea government(MSIT) (NRF-2016R1A5A1008055  and NRF-2018R1C1B6007778)}

\address{Department of Mathematical Sciences and Research Institute of Mathematics, Seoul National University, Seoul 08826, Korea}
\email{bkoh@snu.ac.kr}
\thanks{This work of the second author was supported by the National Research Foundation of Korea (NRF-2017R1A2B4003758).}

\subjclass[2010]{Primary 11E12, 11E20} \keywords{Representations of quadratic forms, squares}


\begin{abstract}    A (positive definite and non-classic  integral) quadratic form is called strongly $s$-regular if it satisfies a strong regularity property on the number of representations of squares of integers. In this article, we prove that  for any integer $k \ge 2$, there are only finitely many isometry classes of strongly $s$-regular quadratic forms with rank $k$ if the minimum of the nonzero squares that are represented by them is fixed. 
\end{abstract}

\maketitle

\section{Introduction}

For a positive definite (non-classic) integral quadratic form of rank $k$
$$ 
f(x_1,x_2,\dots,x_k)=\sum_{1 \le i \le j \le k}a_{ij}x_ix_j \quad (a_{ij} \in \z),
$$
we define $r(n,f)$ the number of representations of an integer $n$ by $f$, that is, 
$$
r(n,f)=\vert\{(x_1,x_2,\dots,x_k)\in \z^k\mid f(x_1,x_2,\dots,x_k)=n\}\vert.
$$
Hence $r(n,f)$ is the number of solutions of the quadratic diophantine equation. 
It is well known that $r(n,f)$ is finite for any positive integer $n$.  However,  there is no known method to compute this number $r(n,f)$ explicitly for an arbitrary quadratic form $f$.   

Hurwitz \cite {hur} noted that if $f(x,y,z)=x^2+y^2+z^2$, then 
$$
r(n^2,f)=6\prod_{p \,:\, \text{odd prime}} \left[ \frac{p^{\ord_p(n)+1}-1}{p-1}-\left(\frac{-1}p\right)\frac{p^{\ord_p(n)}-1}{p-1}\right].
$$  
A complete proof of this formula was given by Olds \cite{ol}.  Note that the product in the right hand side is, in fact, the product of primes dividing $n$, for the value is one if $\ord_p(n)=0$.  Motivated by Hurwitz's formula, Cooper and Lam \cite{cl}  proved that if $f$ is isometric to one of
$$
x^2+y^2+2z^2, \ \ x^2+y^2+3z^2, \ \  x^2+2y^2+2z^2, \  \ \text{and} \ \   x^2+3y^2+3z^2,
$$
then it satisfies a similar equation to Hurwitz's formula. Furthermore,  they  provided a list of $64$ diagonal ternary quadratic forms and  conjectured that  those ternary forms in the list also satisfy similar equations. Some parts of their conjecture were proved by  Guo, Peng, and Qin  \cite {gpq}, H\"urlimann  \cite{hu}, and  finally, all of the remaining cases  were proved by 
the authors \cite{ko1}.

To explain the condition given by Cooper and Lam more precisely, we briefly introduce some terminology on quadratic forms. 
For an integer $n$, we define
$$
w(f)=\sum_{[g] \in \gen(f)/\sim} \frac1{o(g)} \qquad \text{and} \qquad r(n,\gen(f))=\frac1{w(f)}\sum_{[g] \in \gen(f)/\sim} \frac{r(n,g)}{o(g)},
$$ 
where $\gen(f)/\sim$ is the set of isometry class $[g] \subset \gen(f)/\sim$, and $o(f)$ is the order of the isometry group $O(f)$.
Though there is no known method to compute $r(n,f)$ for an arbitrary quadratic form $f$,  the Minkowski-Siegel formula says that 
$r(n,\gen(f))$ is the constant multiple of the product of local densities. 

Roughly speaking, Cooper and Lam's condition is that $r(n^2,f)$ is almost same to $r(n^2,\gen(f))$ for any positive integer $n$. To be more precise, let $f$ be a positive definite (non-classic) integral quadratic form of rank $k$. Let $n_1$ and $n_2$ be positive integers such that $\mathcal P(n_1)\subset \mathcal P(2df)$, $(n_2,2df)=1$ and $n=n_1n_2$. Here  $\mathcal P(n)$ denotes the set of prime factors of $n$, and  $df$ denotes the discriminant of $f$ defined by the determinant of the symmetric matrix
$M_f=\left(\frac{\partial^2f}{\partial x_i\partial x_j}\right).$ 
The quadratic form $f$ is called {\it strongly $s$-regular} if for any positive integer $n=n_1n_2$ such that $r(n_1^2,f)\ne 0$,
\begin{equation} \label{eqn1}
\frac{r(n_1^2n_2^2,f)}{r(n_1^2,f)}= \frac{r(n_1^2n_2^2,\gen(f))}{r(n_1^2,\gen(f))}.
\end{equation}
The Minkowski-Siegel formula says that (for details, see Corollary \ref{sregular})
$$
\frac{r(n_1^2n_2^2,\gen(f))}{r(n_1^2,\gen(f))} =\prod_{p\mid n_2}h_p(df,\mu_p),
$$
where $\mu_p=\text{ord}_p(n_2)$ for any prime $p$ and
$$
h_p(df,\mu_p)\!=\!
\begin{cases}
\displaystyle \sum_{t=0}^{2\mu_p}\left(\frac{(-1)^{\frac{k}2}df}p\right)^tp^{\frac{(k-2)t}{2}}
\ \text{if $k$ is even},\\[0.5cm]

\displaystyle \!\left(\frac{p^{(k-2)(\mu_p +1)}-1}{p^{k-2}-1}-p^{\frac{k-3}{2}} \left(\frac{(-1)^{\frac{k-1}{2}}df}{p}\right)\frac{p^{(k-2)\mu_p}-1}{p^{k-2}-1}\right)\ \!\text{otherwise}.
\end{cases}
$$
Clearly, if $f$ does not represent any squares of integers, then  $f$ is trivially strongly $s$-regular. So, throughout this article, we always assume that any strongly $s$-regular  quadratic form $f$ represents at least one square of an integer. Note that this condition is equivalent to the condition that $f$ represents $1$ over $\q$.  A diagonal ternary quadratic form $f$ satisfies the Cooper and Lam's condition given in \cite{cl} if and only if $f$ satisfies Equation \eqref{eqn1}, that is, $f$ is strongly $s$-regular. 

As a natural generalization of the Cooper and Lam's conjecture, one may consider the problem to classify  all strongly $s$-regular quadratic forms. Related to this question,  we \cite{ko2} proved that  there are only finitely many isometry classes of strongly $s$-regular ternary quadratic forms if the minimum of the nonzero squares that are represented by the form is fixed, and we classified all strongly $s$-regular ternary quadratic forms that represent $1$.  Recently, the first author \cite{kkm}  proved that  there are only finitely many isometry classes of strongly $s$-regular quaternary quadratic forms if the minimum of the nonzero squares that are represented by them is fixed, and classifies all strongly $s$-regular diagonal quaternary quadratic forms that represent $1$.  

The aim of the article is to prove that  there are only finitely many isometry classes of strongly $s$-regular quadratic forms with fixed rank greater than $1$ if the minimum of the nonzero squares that are represented by them is fixed.

The subsequent discussion will be conducted in the better adapted geometric language of quadratic spaces and lattices.  Throughout this article, we always assume that every $\z$-lattice $L=\z x_1+\z x_2+\dots+\z x_k$ is {\it positive definite and non-classic integral}, that is,  the corresponding symmetric matrix 
$$
M_L=(2B(x_i,x_j)) \in M_{k\times k}(\z)
$$ 
is positive definite and  the norm ideal $\mathfrak n(L)$ is $\z$. We define $dL=\det(M_L)$. Note that the definition of  the discriminant $dL$ of a $\z$-lattice $L$ is different from that of \cite{om}. 
 If an integer $n$ is represented by $L$ over $\z_p$ for any prime $p$ including infinite prime, then we say that $n$ is represented by the genus of $L$, and we write $n \,\,\ra \,\,\gen(L)$.  
When $n$ is represented by the lattice $L$ itself, then we write $n\,\,\ra\,\, L$.   
We always assume that $\Delta_p$ is a non square unit in $\z_p^{\times}$ for any odd prime $p$. Recall the for any positive integer $n$, the set of prime factors of $n$ is denoted by $\mathcal P(n)$.

Any unexplained notation and terminology can be found in \cite{ki} or  \cite{om}.

\section{Almost strongly $s$-regular $\z$-lattices}
In this section, we define almost strongly $s$-regularities of quadratic forms, and provide some basic properties of those quadratic forms.

\begin{defn}\label{almost}
Let $P$ be a finite set of primes and let $L$ be a $\z$-lattice of rank $k$ that represents at least one square of integer.
For any positive integer $n$, let $n_1$ and $n_2$ be integers such that $n=n_1n_2$, and 
$$
\mathcal P(n_1)\subset \mathcal P(2dL)\cup P, \quad  \left(n_2,2dL\cdot\prod_{p\in P}p\right)=1.
$$
We say the $\z$-lattice $L$ is {\it almost strongly $s$-regular with respect to $P$} if for any positive integer $n=n_1n_2$,
$$
r(n_1^2n_2^2,L)=r(n_1^2,L)\cdot \prod_{p\mid n_2}h_p(dL,\mu_p),
$$
where $\mu_p=\text{ord}_p(n_2)$ for any prime $p$, and
$$
h_p(dL,\mu_p)\!=\!
\begin{cases}
\displaystyle \sum_{t=0}^{2\mu_p}\left(\frac{(-1)^{\frac{k}2}dL}p\right)^tp^{\frac{(k-2)t}{2}}
\ \text{if $k$ is even},\\[0.5cm]

\displaystyle \!\left(\frac{p^{(k-2)(\mu_p +1)}-1}{p^{k-2}-1}-p^{\frac{k-3}{2}} \!\left(\frac{(-1)^{\frac{k-1}{2}}dL}{p}\right)\!\frac{p^{(k-2)\mu_p}-1}{p^{k-2}-1}\right)\ \!\!\text{otherwise}.
\end{cases}
$$
For convenience, if there exists a finite set of primes $P$ such that $L$ is almost strongly $s$-regular with respect to $P$, then we simply say $L$ is {\it almost strongly $s$-regular}.  In particular, if $L$ satisfies the above  condition when $P=\emptyset$, then we  say that $L$ is {\it strongly $s$-regular}.
\end{defn}

In fact, one may easily check whether or not a $\z$-lattice $L$ represents at least one square of an integer.
\begin{lem} \label{local-s}
Let $L$ be a $\z$-lattice of rank $k$ and let $V=L\otimes \q$ be the quadratic space over $\q$. 
\begin{itemize}
\item [(i)]If $k=2$, then $r(n^2,L')=0$ for any $L'\in \gen(L)$ and any integer $n$ if and only if 
$S_p(V) \ne (-1,d(V_p))_p$  for some prime $p$.

\item [(ii)]If $k=3$, then $r(n^2,L')=0$ for any $L' \in \gen(L)$ and any integer $n$ if and only if $d(V_p)=-1$ and  $S_p(V) \ne (-1,-1)_p$  for some prime $p$.

\item [(iii)]If $k\ge 4$, then $L$ represents at least one square of an integer.
\end{itemize}
Here $d(V_p)$ is the discriminant of $V_p=V\otimes \q_p$, and $S_p(V)$ is the Hasse symbol of $V$ over $\q_p$
\end{lem}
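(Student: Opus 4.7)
The plan is to reduce all three parts of the lemma to the single question of whether $V := L \otimes \q$ represents $1$ over $\q$, and then resolve that question place by place. First I would observe that the following three conditions are equivalent: (a) some $L' \in \gen(L)$ represents some integer square $n^2$; (b) $L$ itself represents some integer square; (c) $V$ represents $1$ over $\q$. For (a)$\Rightarrow$(c): since $L' \otimes \q \cong V$, the space $V$ represents $n^2$ for some positive integer $n$, and scaling by $1/n$ shows $V$ represents $1$. For (c)$\Rightarrow$(b): if $y \in V$ satisfies $Q(y) = 1$, write $y = v/m$ with $v \in L$ and $m \in \n$; then $Q(v) = m^2$. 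The implication (b)$\Rightarrow$(a) is trivial. Thus each of (i), (ii), (iii) reduces to characterizing when $V$ represents $1$ globally.

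By Hasse--Minkowski, together with positive definiteness handling the archimedean place, $V$ represents $1$ over $\q$ if and only if $V_p := V \otimes \q_p$ represents $1$ over $\q_p$ for every finite prime $p$. Locally, $V_p$ represents $1$ if and only if $W_p := V_p \perp \langle -1 \rangle$ is isotropic, and the standard direct-sum formulas yield
$$
d(W_p) = -d(V_p), \qquad S_p(W_p) = S_p(V) \cdot (-1,-d(V_p))_p,
$$
so the invariants of $W_p$ are determined by those of $V_p$.

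I would then conclude case by case via the local isotropy classification from \cite{om}. For (i), $\rank L = 2$ and $W_p$ is ternary; applying the ternary isotropy criterion and simplifying via basic Hilbert symbol identities produces the condition $S_p(V) = (-1, d(V_p))_p$, whose failure at some $p$ gives the criterion in (i). For (ii), $\rank L = 3$ and $W_p$ is quaternary; such a space is anisotropic precisely when $d(W_p) = 1$ modulo squares and $S_p(W_p)$ equals the Hasse invariant of the unique anisotropic quaternary $\q_p$-space. The first condition translates to $d(V_p) = -1$, and a short Hilbert symbol calculation then shows the second is equivalent to $S_p(V) \ne (-1,-1)_p$. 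For (iii), $\rank L \ge 4$ and hence $\dim W_p \ge 5$; any $\q_p$-space of dimension at least $5$ is automatically isotropic, so $V_p$ represents $1$ at every finite $p$, proving (iii) via the reduction above.

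The analytic content is light, since both Hasse--Minkowski and the local isotropy classification are standard. The main obstacle is purely symbolic: one must carefully track the convention for the Hasse symbol used in the paper and the fact that the lattice discriminant $dL = \det(2B(x_i,x_j))$ differs from the quadratic-space discriminant $d(V)$ modulo squares by a power of $2$, so that the local isotropy criteria translate cleanly into the Hilbert symbol expressions appearing in (i) and (ii).
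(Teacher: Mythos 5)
Your proposal is correct and follows essentially the same route as the paper: the paper's entire proof is the one-sentence reduction to the statement that $r(n^2,L')=0$ for all $L'\in\gen(L)$ and all $n$ if and only if $1$ is not represented by $V$, with the local Hasse-symbol conditions in (i)--(iii) left to the standard classification. You simply make explicit both the reduction (your (a)$\Leftrightarrow$(b)$\Leftrightarrow$(c)) and the place-by-place isotropy computations for $V_p\perp\langle -1\rangle$, which check out under O'Meara's conventions.
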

\begin{proof}
The lemma follows directly from the fact that for any integer $n$, $r(n^2,L')=0$ for any $L'\in \gen(L)$ if and only if $1$ is not represented by $V$.
\end{proof}

\begin{lem} \label{tec} Let $P$ be a finite set of primes and let
 $L$ be an almost strongly $s$-regular $\z$-lattice with respect to $P$. Then we have
 $$ 
 r(n^2m^2,L)=r(n^2,L)\prod_{p\mid m} h_p(dL,\ord_p(m)),
 $$
  for any integers $n,m$ such that $(m, 2\cdot n\cdot dL\cdot \prod_{p \in P}p)=1$.
\end{lem}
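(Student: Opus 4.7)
The plan is to reduce the identity to two direct applications of Definition \ref{almost}, combined with the multiplicativity of the $h_p$-product over coprime factorizations.

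First, I would fix the decomposition $n = n_1 n_2$ in the (essentially unique) way prescribed by the definition: $n_1$ is the largest divisor of $n$ supported on $\mathcal P(2dL) \cup P$, and $n_2 = n/n_1$ is coprime to $2dL \prod_{p \in P} p$. The hypothesis $(m,\, 2n \cdot dL \cdot \prod_{p\in P} p) = 1$ immediately yields $(m, n_2) = 1$ and $(m,\, 2dL \prod_{p\in P} p) = 1$. Hence $nm = n_1 \cdot (n_2 m)$ is again an admissible decomposition of $nm$ for Definition \ref{almost}, since $\mathcal P(n_1) \subset \mathcal P(2dL) \cup P$ and $(n_2 m,\, 2dL \prod_{p\in P} p) = 1$.

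Next, I would write down the two instances of almost strong $s$-regularity applied to $n$ and to $nm$:
$$
r(n^2, L) = r(n_1^2, L) \prod_{p \mid n_2} h_p(dL, \ord_p(n_2)), \qquad r(n^2 m^2, L) = r(n_1^2, L) \prod_{p \mid n_2 m} h_p(dL, \ord_p(n_2 m)).
$$
The key observation is that, since $(n_2, m) = 1$, the index set $\{p : p \mid n_2 m\}$ is the disjoint union of $\{p : p \mid n_2\}$ and $\{p : p \mid m\}$, and on these two pieces $\ord_p(n_2 m)$ reduces to $\ord_p(n_2)$ and $\ord_p(m)$ respectively. This factors the product in the second identity as
$$
\prod_{p \mid n_2} h_p(dL, \ord_p(n_2)) \cdot \prod_{p \mid m} h_p(dL, \ord_p(m)),
$$
and substituting the first identity into the result yields the desired formula.

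There is no real obstacle here; the content is purely bookkeeping in the multiplicative decomposition of $nm$. The one marginal point to note is the degenerate case $r(n_1^2, L) = 0$, in which case both $r(n^2, L)$ and $r(n^2 m^2, L)$ vanish by the displayed identities, so the claimed equation trivially holds with both sides zero. Otherwise, the argument simply reduces to cancellation of $r(n_1^2, L)$ between the two formulas.
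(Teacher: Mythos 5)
Your proposal is correct and follows essentially the same route as the paper's own proof: decompose $n=n_1n_2$ as in Definition \ref{almost}, observe that $n_1\cdot(n_2m)$ is an admissible decomposition of $nm$, apply the definition to both $n$ and $nm$, and split the product over $p\mid n_2m$ using $(n_2,m)=1$. Your extra remark about the degenerate case $r(n_1^2,L)=0$ is a harmless (and slightly more careful) addition, since substitution rather than cancellation makes it unnecessary.
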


\begin{proof} Let $n=n_1n_2$ for some integers $n_1$ and $n_2$ satisfying all conditions given in Definition \ref{almost}.  Since $(n_2,m)=1$,  we have
$$
\begin{array} {ll} 
\displaystyle r(n^2m^2,L)&=r(n_1^2n_2^2m^2,L)=r(n_1^2,L)\displaystyle \prod_{p\mid n_2m} h_p(dL,\ord_p(n_2m))\\
&=r(n_1^2,L)\displaystyle \prod_{p\mid n_2}h_p(dL,\ord_p(n_2))\prod_{p\mid m}h_p(dL,\ord_p(m))\\
&=r(n^2,L)\displaystyle \prod_{p\mid m}h_p(dL,\ord_p(m)).\\
\end{array}
$$
This completes the proof.
\end{proof}

\begin{cor} \label{almost1} Let $P_1$ and $P_2$ be sets of finite primes such that $P_1 \subseteq P_2$. 
Then any almost strongly $s$-regular $\z$-lattice with respect to $P_1$ is also almost strongly $s$-regular with respect to $P_2$. 
In particular, any strongly $s$-regular $\z$-lattice is almost strongly $s$-regular with respect to any finite set of primes. 
\end{cor}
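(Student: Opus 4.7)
The plan is to reduce the statement directly to Lemma \ref{tec}, which was just established. Suppose $L$ is almost strongly $s$-regular with respect to $P_1$, and fix a factorization $n = n_1 n_2$ satisfying the $P_2$-hypotheses of Definition \ref{almost}: $\mathcal{P}(n_1) \subset \mathcal{P}(2dL) \cup P_2$ and $\bigl(n_2,\, 2dL\prod_{p\in P_2} p\bigr) = 1$. The goal is to deduce
$$
r(n_1^2 n_2^2, L) \;=\; r(n_1^2, L)\prod_{p\mid n_2} h_p(dL, \ord_p(n_2)),
$$
which is precisely the defining identity for $P_2$-almost strong $s$-regularity.

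First, since $P_1 \subseteq P_2$, the $P_2$-coprimality of $n_2$ immediately forces $\bigl(n_2,\, 2dL\prod_{p\in P_1} p\bigr) = 1$. Second, because the prime support of $n_2$ avoids $\mathcal{P}(2dL) \cup P_2$ while that of $n_1$ is contained in it, $n_1$ and $n_2$ are coprime. Combining the two observations yields $\bigl(n_2,\, 2 n_1 dL \prod_{p\in P_1} p\bigr) = 1$, which is exactly the coprimality hypothesis needed in Lemma \ref{tec} when the lemma's variables "$n$" and "$m$" are taken to be $n_1$ and $n_2$, respectively. Applying the lemma produces the required identity. The "in particular" clause then follows by specializing to $P_1 = \emptyset$: a strongly $s$-regular lattice is by definition almost strongly $s$-regular with respect to $\emptyset$, and $\emptyset \subseteq P$ for every finite set of primes $P$.

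There is essentially no obstacle here; the corollary is a bookkeeping consequence of Lemma \ref{tec}. The only substantive point is the transfer of the coprimality condition when enlarging from $P_1$ to $P_2$, and this is settled by the two observations above: primes in $P_2 \setminus P_1$ that happen to divide $n_1$ cause no difficulty, because in Lemma \ref{tec} they are absorbed into the unrestricted "$n$"-slot rather than into the coprimality condition imposed on "$m$".
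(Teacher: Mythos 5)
Your proof is correct and follows the same route as the paper, whose entire proof is the single line ``follows directly from Lemma \ref{tec}''; you have simply spelled out the substitution ($n\mapsto n_1$, $m\mapsto n_2$, $P\mapsto P_1$) and verified the coprimality transfer that the paper leaves implicit.
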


\begin{proof} The corollary follows directly from Lemma \ref{tec}.
\end{proof}

\begin{lem} \label{min}
Let $L$ be a $\z$-lattice of rank $k$. For any positive integer $n$, let $n_1$ and $n_2$ be positive integers such that $\mathcal P(n_1)\subset \mathcal P(2dL)$, $(n_2,2dL)=1$ and $n=n_1n_2$. For any prime $p$, we put $\ord_p(n)=\mu_p$. 
\begin{itemize}
\item [(i)]Assume that $k$ is even. If $n_1$ is represented by the genus of $L$, then
$$
\begin{array}{ll}
\displaystyle\frac{r(n_1n_2,\gen(L))}{r(n_1,\gen(L))}
 &\displaystyle=n_2^{\frac{k-2}{2}}\cdot\prod_{p\mid 2dL}\frac{\alpha_p(n_1n_2,L)}{\alpha_p(n_1,L)}\prod_{p\nmid 2dL}\frac{\alpha_p(n_1n_2,L)}{\alpha_p(n_1,L)}\\[0.45cm]
 &\displaystyle=\prod_{p\mid n_2}\left[\sum_{t=0}^{\mu_p}\left(\frac{(-1)^{\frac{k}2}dL}p\right)^tp^{\frac{(k-2)t}{2}}\right].
\end{array}
$$
In particular, if $n_1^2$ is represented by the genus of $L$, then
$$
\frac{r(n_1^2n_2^2,\gen(L))}{r(n_1^2,\gen(L))}=\displaystyle\prod_{p\mid n_2}\left[\sum_{t=0}^{2\mu_p}\left(\frac{(-1)^{\frac{k}2}dL}p\right)^tp^{\frac{(k-2)t}{2}}\right].
$$
\item [(ii)]Assume that $k$ is odd. If $n_1^2$ is represented by the genus of $L$, then
$$
\begin{array}{ll}
 \displaystyle \frac{r(n_1^2n_2^2,\gen(L))}{r(n_1^2,\gen(L))} \!\!\!&=\displaystyle n_2^{k-2}\cdot\prod_{p\mid 2dL}\frac{\alpha_p(n_1^2n_2^2,L)}{\alpha_p(n_1^2,L)}\prod_{p\nmid 2dL}\frac{\alpha_p(n_1^2n_2^2,L)}{\alpha_p(n_1^2,L)}\\[0.45cm]
 \!\!\!&\displaystyle=\prod_{p\mid n_2}\left(\frac{p^{(k-2)(\mu_p +1)}-1}{p^{k-2}-1}-p^{\frac{k-3}{2}} \left(\frac{(-1)^{\frac{k-1}{2}}dL}{p}\right)\frac{p^{(k-2)\mu_p}-1}{p^{k-2}-1}\right).
\end{array}
$$
\end{itemize}
In particular, any $\z$-lattice with class number $1$ is strongly $s$-regular. 
\end{lem}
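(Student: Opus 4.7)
My plan is to apply the Minkowski-Siegel mass formula, which expresses the weighted genus representation number as
$$r(n,\gen(L)) \;=\; C_L \cdot n^{(k-2)/2} \cdot \prod_p \alpha_p(n,L),$$
with $C_L$ depending only on $L$ and $\alpha_p$ the standard $p$-adic local density. Dividing this identity at $n=n_1n_2$ by its value at $n=n_1$ (and similarly $n_1^2n_2^2$ over $n_1^2$ for odd $k$) cancels $C_L$, turns the archimedean factor into $n_2^{(k-2)/2}$ or $n_2^{k-2}$, and leaves the product of local density ratios. This is exactly the first equality in each part of the statement.

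To reach the closed form, I would split the product according to whether $p\mid 2dL$ or $p\nmid 2dL$. For $p\mid 2dL$, the hypothesis $(n_2,2dL)=1$ forces $n_2\in\z_p^\times$, so $n_1n_2$ and $n_1$ differ by a $p$-adic unit (and, in the squared case, by a $p$-adic unit square); an argument from the defining limit of $\alpha_p$ shows invariance under such scalings, making each factor at a bad prime equal to $1$. For $p\nmid 2dL$ the localization $L_p$ is unimodular; when $p\nmid n_2$ both arguments are $p$-adic units, so the ratio is again $1$. The substantive case is $p\mid n_2$, which I would handle by invoking the classical explicit formula for local densities on unimodular forms. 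Combining that formula with the archimedean factor $n_2^{(k-2)/2}$ or $n_2^{k-2}$ reorganizes the contribution at each such $p$ into the geometric sum $\sum_{t=0}^{\mu_p}\bigl(\tfrac{(-1)^{k/2}dL}{p}\bigr)^t p^{(k-2)t/2}$ in the even-rank case, and into the corresponding rational expression of part (ii) in the odd-rank case. The ``In particular'' formula in (i) then follows by substituting $n_1\mapsto n_1^2$, $n_2\mapsto n_2^2$ and using $\ord_p(n_2^2)=2\mu_p$ to extend the summation to $t=0,\dots,2\mu_p$.

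The final assertion about class number $1$ lattices is immediate: if $|\gen(L)/\!\sim\!|=1$ then $r(n,L)=r(n,\gen(L))$ for every $n$, so \eqref{eqn1} reduces to the ratio just computed and $L$ is strongly $s$-regular by definition.

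The main obstacle I foresee is the careful handling of the local densities at primes dividing $2dL$ --- especially at $p=2$, where Jordan decompositions are delicate and the unit invariance of $\alpha_p$ needs to be argued modulo $8$ rather than modulo $p$ --- together with the detailed bookkeeping at unimodular primes to make the Kronecker symbol $\bigl(\tfrac{(-1)^{k/2}dL}{p}\bigr)$ (or its odd-rank variant) and the correct powers of $p$ emerge with the signs prescribed in the statement.
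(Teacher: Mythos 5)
Your proposal matches the paper's proof: the paper likewise writes down the Minkowski--Siegel formula $r(n,\gen(L))=\e_k\cdot\pi^{k/2}\,\Gamma(k/2)^{-1}(dL)^{-1/2}\,n^{(k-2)/2}\prod_{p<\infty}\alpha_p(n,L)$, cancels everything but the local densities in the ratio, and substitutes the explicit local-density formulas at the unimodular primes $p\nmid 2dL$ (Theorem 3.1 of Yang's paper, exactly the ``classical explicit formula'' you invoke). Your additional remarks --- the unit-square invariance of $\alpha_p$ at the primes dividing $2dL$ and the immediacy of the class-number-one assertion --- are precisely the steps the paper leaves implicit in its closing ``the lemma follows directly from this.''
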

\begin{proof}
By the Minkowski-Siegel formula, we have
$$
r(n,\gen(L))=\e_k \cdot\pi^{\frac k2}\cdot \Gamma\left(\frac k2\right)^{-1}\cdot (dL)^{-\frac 12} \cdot n^{\frac{k-2}{2}}\cdot \prod_{p<\infty}\alpha_p(n,L),
$$
where $\e_k=\frac12$ if $k=2$, $\e_k=1$ otherwise, and $\alpha_p$ is the local density.
Assume that $p$ does not divide $2dL$. If $k$ is even, then
$$
\alpha_p(n,L)=\left(\sum_{t=0}^{\mu_p}\left(\frac{(-1)^{\frac{k}2}dL}p\right)^tp^{\frac{(k-2)(t-\mu_p)}{2}}\right)\left(1-\left(\frac{-dL}{p}\right)\frac{1}{p}\right),
$$
and if $k$ is odd, then
$$
\alpha_p(n^2,L)=\frac{(1-p^{-1})(1-p^{-(k-2)\mu_p})}{p^{k-2}-1}+\left(\frac{(-1)^{\frac{k-1}{2}}dL}{p}\right)p^{-(k-2)\mu_p-\frac{k-1}{2}}+1,
$$
by Theorem 3.1 of \cite{y}.
The lemma follows directly from this.
\end{proof}

\begin{cor}\label{sregular}
Let $L$ be a $\z$-lattice of rank $k$. 
Then $L$ is strongly $s$-regular if and only if  $L$ satisfies the following equation
$$
\frac{r(n_1^2n_2^2,L)}{r(n_1^2,L)}= \frac{r(n_1^2n_2^2,\gen(L))}{r(n_1^2,\gen(L))},
$$
for any positive integer $n=n_1n_2$ satisfying all conditions given in Definition \ref{almost} and $r(n_1^2,L)\ne 0$.
\end{cor}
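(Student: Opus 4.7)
My strategy is to treat the corollary as an immediate transcription of Definition \ref{almost} (at $P = \emptyset$) via the explicit genus-ratio formula of Lemma \ref{min}. The first step is to unwind what it means for $L$ to be strongly $s$-regular: by Definition \ref{almost} with $P = \emptyset$, this says that
\begin{equation*}
r(n_1^2 n_2^2, L) = r(n_1^2, L) \cdot \prod_{p \mid n_2} h_p(dL, \mu_p)
\end{equation*}
for every admissible decomposition $n = n_1 n_2$, i.e. with $\mathcal{P}(n_1) \subset \mathcal{P}(2dL)$ and $(n_2, 2dL) = 1$.

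Next I would invoke Lemma \ref{min} to identify the product $\prod_{p \mid n_2} h_p(dL, \mu_p)$ with the genus ratio $r(n_1^2 n_2^2, \gen(L))/r(n_1^2, \gen(L))$. For even $k$ this is precisely the second displayed equation in part (i), and for odd $k$ it is the formula in part (ii); in both cases the factor-by-factor comparison with the piecewise definition of $h_p$ in Definition \ref{almost} is direct (the sum up to $2\mu_p$ for even rank, the closed form with the quadratic residue symbol for odd rank). The identification requires the hypothesis $n_1^2 \ra \gen(L)$, but this is automatic whenever $r(n_1^2, L) \neq 0$ since $L \in \gen(L)$.

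Given these two ingredients, the equivalence of the corollary becomes a one-line calculation. When $r(n_1^2, L) \neq 0$, dividing the definition's equation by $r(n_1^2, L)$ yields exactly the corollary's ratio equation after substituting the product by the genus ratio; conversely, multiplying the ratio equation by $r(n_1^2, L)$ and applying Lemma \ref{min} in the reverse direction recovers the definition's equation. The only slightly delicate point is the edge case $r(n_1^2, L) = 0$: Definition \ref{almost} then forces $r(n_1^2 n_2^2, L) = 0$, while the corollary's statement explicitly excludes this case from its hypothesis, so no separate matching argument is needed. Overall I expect no genuine obstacle, since all the analytic substance (the Minkowski--Siegel formula together with the standard local density computation at primes $p \nmid 2dL$) has already been absorbed into Lemma \ref{min}, and the corollary is essentially a translation between two equivalent formulations of the same identity.
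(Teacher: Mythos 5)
Your proposal is correct and follows essentially the same route as the paper, whose entire proof is the one-line remark that the corollary follows directly from Lemma \ref{min}; you have simply filled in the details of that reduction (identifying $\prod_{p\mid n_2}h_p(dL,\mu_p)$ with the genus ratio, noting that $r(n_1^2,L)\ne 0$ guarantees $n_1^2\ra\gen(L)$, and dividing or multiplying by $r(n_1^2,L)$). No substantive difference in approach.
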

\begin{proof}
The corollary follows directly from Lemma \ref{min}.
\end{proof}

\section{Strongly $s$-regular binary $\z$-lattices}

In this section, we prove that there are only finitely many isometry classes of strongly $s$-regular binary  quadratic forms  if the minimum of the nonzero squares that are represented by them is fixed. 

Let $D$ be a negative integer that is congruent to $0$ or $1$ modulo $4$. 
We define $\mathfrak G_{D}$ to be the set of all proper classes of primitive binary quadratic forms of a fixed discriminant $-D$. Recall that the discriminant of a primitive binary quadratic form 
$$
f(x,y)=ax^2+bxy+cy^2,  \  \ (a,b,c)=1
$$
 is defined by $df=4ac-b^2$.
Hence $-df$ is congruent to $0$ or $1$ modulo $4$. It is well known that $\mathfrak G_{D}$ forms a finite abelian group under the composition law (for details, see \cite {ca}). For a proper class $\mathfrak A\in \mathfrak G_{D}$, we denote $r(n,\mathfrak A)$ to be $r(n,f)$ for some primitive binary quadratic form $f\in \mathfrak A$.  Let $L=\z u+\z v$ be a binary $\z$-lattice. The corresponding binary quadratic form $f_L$ to the $\z$-lattice $L$ is defined by $f_L(x,y)=Q(u)x^2+2B(u,v)xy+Q(v)y^2$.  Though the isometry class of $f_L$ is independent of the choice of the basis for $L$, the proper isometry class of $f_L$ depends on the choice of the basis for $L$. In fact, there might be two different proper isometry classes of binary quadratic forms corresponding to the binary $\z$-lattice $L$ according to the choice of the basis for $L$.

Let $L$ be an almost strongly $s$-regular binary $\z$-lattice.
Since we are assuming that the genus of $L$ represents at least one square of an integer, we always assume that $V_p \simeq \langle 1,dV_p \rangle$ for any prime $p$.  Hence  $S_p(V)=(-1,d(V_p))_p$ for any prime $p$. Therefore, we have the followings:
\begin{itemize}
\item [(i)]if $1$ is represented by $L_2$, then $L_2$ is isometric to one of
$$
\begin{pmatrix}1&\frac12\\\frac12&1 \end{pmatrix},\ \ \begin{pmatrix}0&\frac12\\\frac12&0 \end{pmatrix},\ \  \text{and} \ \ \langle 1, \e 2^t\rangle,
$$ 
for some $\e \in \z_2^{\times}$ and  an integer $t\ge 0$.

\item [(ii)]if $1$ is not represented by $L_2$, then $L_2$ is isometric to one of
$$
\hspace{1cm} \langle 3, \e_1 2^{2t_1} \rangle,\ \ \langle 3, \e_2 2^{2t_2+1} \rangle,\ \ \langle 5, \e_3 2^{2t_3+2} \rangle,\ \ \text{and} \ \ \langle 7, \e_4 2^{t_4+2} \rangle, 
$$
for some $\e_1,\e_4 \in \{1,5\}$, $\e_2\in \{3,7\}$ and $\e_3 \in \{1,3,5,7\}$ and  an integer $t_i\ge 1$  for $i=1,2,3,4$.

\item [(iii)]if $p$ is odd and $1$ is represented by $L_p$, then $L_p$ is isometric to $\langle 1, \e p^t\rangle$ for some $\e\in \z_p^{\times}$ and an integer $t\ge 0$.

\item [(iv)]if $p$ is odd and $1$ is not represented by $L_p$, then $L_p$ is isometric to $\langle \Delta_p, \e p^{2t}\rangle$ for some $\e\in \z_p^{\times}$ and  an integer $t\ge 1$.
\end{itemize}
Note that if $\ord_p\left(\frac14dL\right) \le 1$, then $L_p$ always represents $1$ under the assumption given above.

\begin{lem}\label{genus}
Any almost strongly $s$-regular binary $\z$-lattice represents all squares of integers that are represented by its genus. 
\end{lem}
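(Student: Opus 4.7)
The plan is to exploit the multiplicative structure of almost strong $s$-regularity to reduce the problem to the ``bad-prime'' part $n_1$ of $n$, and then to argue globally that $L$ must represent $n_1^2$.

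Write $n = n_1 n_2$ with $\mathcal{P}(n_1) \subseteq \mathcal{P}(2dL) \cup P$ and $(n_2, 2dL\cdot\prod_{p\in P}p)=1$. For $k=2$ each local factor
$$
h_p(dL,\mu_p) = \sum_{t=0}^{2\mu_p}\left(\tfrac{-dL}{p}\right)^{t}
$$
is strictly positive: it equals $2\mu_p+1$ when $\left(\tfrac{-dL}{p}\right)=1$ and equals $1$ otherwise. Hence Definition~\ref{almost} gives
$$
r(n^2,L) = r(n_1^2,L)\prod_{p\mid n_2}h_p(dL,\mu_p),
$$
so $n^2\to L$ iff $n_1^2\to L$; Lemma~\ref{min} gives the analogous statement for the genus. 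Thus the lemma reduces to showing that $n_1^2\to L$ whenever $n_1^2\to\gen(L)$ with $\mathcal{P}(n_1)\subseteq\mathcal{P}(2dL)\cup P$.

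Next I verify that $n^2\to\gen(L)$ really does force $n_1^2\to\gen(L)$, by a prime-by-prime check. At each $p\in\mathcal{P}(2dL)\cup P$, we have $n_2\in\z_p^\times$, so $n_1^2 = n_2^{-2}n^2$ is represented by $L_p$ whenever $n^2$ is. At the remaining primes, $L_p$ is unimodular; the standing hypothesis $V_p\simeq\langle 1,dV_p\rangle$ combined with the local classification (iii)--(iv) forces $L_p$ to represent $1$, hence the square unit $n_1^2$.

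The remaining step, which I expect to be the main obstacle, is to upgrade $r(n_1^2,\gen(L))>0$ to $r(n_1^2,L)>0$ using the almost strong $s$-regularity of $L$. My plan is to argue by contradiction: if $r(n_1^2,L)=0$, then Lemma~\ref{tec} propagates the vanishing to $r(n_1^2 m^2,L)=0$ for every $m$ coprime to $2\cdot n_1\cdot dL\cdot\prod_{p\in P}p$, whereas Lemma~\ref{min} yields $r(n_1^2 m^2,\gen(L))>0$ for all such $m$. Consequently the entire Minkowski--Siegel representation mass for this infinite family must be absorbed by the finitely many other proper classes in $\gen(L)$. The idea is to use this rigidity, together with the local Jordan data from (i)--(iv), to force $L$ into a proper class that does represent $n_1^2$, contradicting the assumption; making this work requires reconciling the class-group (Gauss composition) structure of the binary genus with the multiplicative constraint imposed by almost strong $s$-regularity, and that is where I expect the real work to lie.
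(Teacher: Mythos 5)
There is a genuine gap, and you essentially flag it yourself: the entire content of the lemma sits in the step you defer to the end, and the argument you sketch for it does not close. Your first two reductions (positivity of the factors $h_p(dL,\mu_p)$ for $k=2$, and the propagation of the vanishing $r(a^2,L)=0$ to $r(a^2m^2,L)=0$ for all $m$ coprime to $2dL\cdot\prod_{p\in P}p$ via Lemma \ref{tec}) agree with the paper's setup. But the hope that the "rigidity" of the Minkowski--Siegel mass being absorbed by the other proper classes forces a contradiction is not a proof and, as stated, would fail: for a binary genus with several classes there is no a priori obstruction to one fixed class missing every integer of the form $a^2m^2$ while the others carry all of the mass. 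The mass formula by itself never distinguishes individual classes.

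The missing idea is the Gauss composition argument. By Theorem 3.1 of Chapter 14 of \cite{ca}, the proper classes in $\gen(f_{L'})$ are exactly $\{[f_{L'}]\cdot[g]^2 : [g]\in\mathfrak G_{-dL}\}$, so one can write $[f_L]=[f_{L'}]\cdot[g]^2$ for some $[g]$. A primitive binary form represents infinitely many primes, so $g$ represents some prime $r$ with $(r,\,2dL\cdot\prod_{p\in P}p)=1$. The multiplicativity of represented values under composition (see \cite{ef}) then gives
$$
r(a^2r^2,[f_L])=r\bigl(a^2r^2,[f_{L'}]\cdot[g]^2\bigr)\ne 0,
$$
since $f_{L'}$ represents $a^2$ and $g$ represents $r$. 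This contradicts the vanishing $r(a^2r^2,L)=0$ you already established. So your skeleton is compatible with the paper's proof, but without this composition step the argument is incomplete; also note that your preliminary reduction to the "bad part" $n_1$ buys nothing here, since the definition of almost strong $s$-regularity gives no control over $r(n_1^2,L)$ itself, and the paper correspondingly works with an arbitrary $a$ from the start.
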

\begin{proof}
Let $L$ be an almost strongly $s$-regular binary $\z$-lattice with respect to a finite set $P$ of primes. Suppose that a square of an integer $a^2$ is represented by the genus of $L$, but  is not represented by $L$.  Let $q$ be any prime such that $(q,2dL\cdot\prod_{p\in P}p)=1$, and let  $a=q^tb$, where $b$ is an integer such that $(q,b)=1$. Then, by Lemma \ref{tec},  we have
\begin{equation}\label{binary1}
r(q^2a^2,L)=r(q^{2t+2}b^2,L)=r(b^2,L) \cdot \sum_{i=0}^{2t+2}\left(\frac{-dL}{q}\right)=0.
\end{equation}
Since $a^2$ is represented by the genus of $L$, there is a binary $\z$-lattice $L' \in \gen(L)$ that represents $a^2$, that is, $r(a^2,L')\ne 0$. Let $f_L$($f_{L'}$) be the quadratic form corresponding to $L$($L'$, respectively). We also fix  a proper class $[f_L]$($[f_{L'}]$) containing the binary quadratic  form $f_L$($f_{L'}$, respectively).    Note that  the quadratic form corresponding to $L$ is uniquely determined up to isometry, but is not uniquely determined up to proper isometry, in general.  However, the result in the remaining part does not change, no matter which proper classes are chosen. Note that $[f_{L}], [f_{L'}] \in \mathfrak G_{-dL}$. Now, by Theorem 3.1 of Chapter $14$ in \cite{ca}, we have
$$
\gen(f_{L'})/\sim=\{[f_{L'}]\cdot[g]^2:[g]\in \mathfrak G_{-dL}\} \subset \mathfrak G_{-dL}.
$$
Here, $\gen(f_{L'})/\sim$ is the set of proper classes contained in the genus of $f_{L'}$.
Since $[f_L] \in \gen(f_{L'})/\sim$, there is a proper class $[g]\in \mathfrak G_{-dL}$ such that $[f_L]= [f_{L'}]\cdot[g]^2$. Furthermore, since the primitive binary quadratic form $g$ represents infinitely many primes, there is a prime $r$ with $(r,2dL\cdot\prod_{p\in P}p)=1$  that  is represented by the binary quadratic form $g$, that is, $r(r,g)\ne0$. Therefore, by the property of composition law (see \cite {ef}), we have
$$
r(a^2r^2,L)=r(a^2r^2,[f_L])=r(a^2r^2,[f_{L'}]\cdot[g]^2)\ne 0,
$$
which is a contradiction to \eqref{binary1}.
\end{proof}

\begin{cor}
Let $L$ be an almost strongly $s$-regular binary $\z$-lattice. Then any integer $m$ such that $m^2$ is represented by $L$ is a multiple of
$$
m_s(L)=\text{min}_{n\in \z^{+}}\{n:r(n^2,L)\ne 0\}.
$$
\end{cor}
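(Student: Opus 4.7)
The plan is to show that the set $S(L) := \{n \in \z^+ : r(n^2, L) \neq 0\}$ is closed under taking greatest common divisors. Granted this closure, the corollary follows at once: for any $m \in S(L)$ the integer $d := \gcd(m, m_s(L))$ also lies in $S(L)$ and satisfies $d \le m_s(L)$, so the minimality of $m_s(L)$ forces $d = m_s(L)$, that is, $m_s(L) \mid m$.

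To prove the closure I would first invoke Lemma \ref{genus}: $n \in S(L)$ if and only if $n^2 \ra L_p$ at every prime $p$. Writing $S_p(L) := \{n \in \z^+ : n^2 \ra L_p\}$, we have $S(L) = \bigcap_p S_p(L)$, so it suffices to show that each $S_p(L)$ is closed under gcd. I would in fact prove the stronger structural statement that $S_p(L) = p^{e_p} \cdot \z^+$ for some integer $e_p \ge 0$, with $e_p = 0$ for all but finitely many $p$; this implies closure under gcd since $v_p(\gcd(a,b)) = \min(v_p(a), v_p(b))$.

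This structural claim is checked by inspecting the four local cases listed immediately after Lemma \ref{local-s}. In cases (i) and (iii), where $1 \ra L_p$, every square of a positive integer is represented by $L_p$ via the vector $(n,0)$, so $e_p = 0$. In case (iv), where $L_p \simeq \langle \Delta_p, \epsilon p^{2t}\rangle$ with $p$ odd and $t \ge 1$, reducing modulo a suitable power of $p$ and using that $\Delta_p$ is a non-square forces $v_p(n) \ge t$ whenever $n^2 \ra L_p$; conversely, Hensel's lemma applied to the mod-$p$ reduction of $\Delta_p X^2 + \epsilon Y^2$, which is a nondegenerate binary form over $\f_p$ and hence represents every nonzero residue class, shows the condition $p^t \mid n$ is also sufficient, so $e_p = t$. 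In case (ii) at $p = 2$ the same strategy applies to each of the four sub-types of $L_2$ and yields $S_2(L) = 2^{e_2} \z^+$ for an explicit $e_2$ depending on the sub-type.

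The main obstacle is the case (ii) verification at $p = 2$: the four sub-types of $L_2$ have to be treated separately by $\z_2$-adic computations modulo sufficiently high powers of $2$. These calculations are direct but not uniform across the sub-types, and one must track unit residues carefully to identify the correct $e_2$ in each case.
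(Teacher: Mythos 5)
Your overall strategy---reduce via Lemma \ref{genus} to the local question of which $n^2$ are represented by each $L_p$, then argue prime-by-prime on valuations and finish by gcd-closure and minimality---is exactly the argument the paper compresses into the one-line assertion that $\ord_p(m_s(L))\le \ord_p(m)$ for every $p$, so the skeleton is correct and matches the paper. The one substantive comment is that your structural claim, that $S_p(L)=\{n:\ord_p(n)\ge e_p\}$, does not require any of the case-by-case local classification you propose; in particular the $2$-adic sub-type computations you single out as the main obstacle (and leave undone) can be skipped entirely. Two one-line observations give the claim uniformly for every prime and every local structure: (a) if $n^2=Q(v)$ with $v\in L_p$ and $u\in\z_p^{\times}$, then $(un)^2=Q(uv)$, so membership of $n$ in $S_p(L)$ depends only on $\ord_p(n)$; and (b) $(pn)^2=Q(pv)$, so the set of valuations occurring in $S_p(L)$ is closed upward, hence equals $[e_p,\infty)$ where $e_p$ is its minimum. (Finiteness of $\prod_p p^{e_p}$ is automatic, since $e_p=\ord_p(m_s(L))$.) This also repairs the small gap in your case (iv), where Hensel's lemma only handles $\ord_p(n)=t$ and the scaling trick (b) is needed to pass to $\ord_p(n)>t$. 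With that substitution your proof is complete; as written, the deferred analysis at $p=2$ is the only genuinely unfinished step.
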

\begin{proof} Let $m$ be an integer such that $m^2$ is represented by $L$. Then one may easily show that for any prime $p$, $\ord_p(m_s(L)) \le \ord_p(m)$ from the consequence of Lemma \ref{genus}. The corollary follows directly from this.
\end{proof}

Let $L$ be a binary $\z$-lattice. For any prime $p$, the $\Lambda_p$-transformation (or Watson transformation) is defined as follows: 
$$
\Lambda_p(L)= \{ x \in L : Q(x + z) \equiv Q(z) \  (\text{mod} \ p) \mbox{ for
all $z \in L$}\}.
$$
Note that $\Lambda_p(L)$ is, in fact, a sublattice of $L$.  Let $\lambda_p(L)$ be the primitive lattice (that is, $\mathfrak n(\lambda_p(L))=\z$) obtained from $\Lambda_p(L)$ by scaling $V=L\otimes \mathbb Q$ by a suitable
rational number. For a positive integer $N=p_1^{e_1}p_2^{e_2}\cdots p_k^{e_k}$,   we  also define 
$$
\lambda_N(L)=\lambda_{p_1}^{e_1}(\lambda_{p_2}^{e_2}(\cdots\lambda_{p_{k-1}}^{e_{k-1}}(\lambda_{p_k}^{e_k}(L))\cdots)).
$$
Note that $\lambda_p(\lambda_q(L))=\lambda_q(\lambda_p(L))$ for any primes $p \ne q$.

\begin{lem} \label{aniso}
Let $L$ be a binary $\z$-lattice. For any prime $p$ dividing $dL$, we have  
$$
r(pn,L)=r(pn,\Lambda_p(L)).
$$
\end{lem}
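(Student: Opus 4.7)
The plan is to establish the stronger statement that, under the hypothesis $p\mid dL$, every $x\in L$ with $Q(x)\equiv 0\pmod p$ already lies in $\Lambda_p(L)$. Granting this, any representation $Q(x)=pn$ automatically satisfies $x\in\Lambda_p(L)$; combined with the trivial inclusion $\Lambda_p(L)\subseteq L$, this gives $r(pn,L)=r(pn,\Lambda_p(L))$.

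First I would unpack the Watson condition. Expanding $Q(x+z)-Q(z)=Q(x)+2B(x,z)$ and setting $z=0$ in the defining relation yields $Q(x)\equiv 0\pmod p$; feeding this back into the condition for general $z\in L$ then gives $2B(x,z)\equiv 0\pmod p$ for every $z\in L$. Thus $\Lambda_p(L)$ is precisely the set of $x\in L$ with $Q(x)\equiv 0\pmod p$ and $2B(x,L)\subseteq p\z$. Since both conditions depend only on $x$ modulo $p$, it suffices to work in the finite quotient $L/pL$ with its reduced bilinear and quadratic forms.

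Next I would exploit the hypothesis $p\mid dL=\det M_L$: the reduced form $\overline{2B}$ on the two-dimensional $\f_p$-space $L/pL$ is degenerate. For odd $p$, either $\overline{2B}$ is identically zero (and then $2B(x,L)\subseteq p\z$ for every $x\in L$), or $\overline{2B}$ has rank one. In the latter case, choose a basis $\bar e,\bar f$ of $L/pL$ with $\bar e$ spanning the radical; the reduced quadratic form is then $\overline Q(\alpha\bar e+\beta\bar f)=\tfrac{c}{2}\beta^{2}$ for some $c\in\f_p^{\times}$, so $\overline Q$ vanishes exactly on the radical. Either way, $Q(x)\equiv 0\pmod p$ forces $\bar x$ into the radical, which is precisely $2B(x,L)\subseteq p\z$.

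The only real obstacle is the prime $p=2$, where in the non-classic integral setting $2B(x,y)$ need not be even in general. Here I would argue directly from the Gram matrix: writing $L=\z u+\z v$, the diagonal entries of $M_L$ are $2Q(u)$ and $2Q(v)$, both even, so $dL\equiv -(2B(u,v))^{2}\pmod 4$. The hypothesis $2\mid dL$ therefore forces $2B(u,v)$ itself to be even, so every entry of $M_L$ is even and $2B(x,L)\subseteq 2\z$ holds for every $x\in L$ with no restriction. Combining the odd and even cases yields the desired inclusion and hence the lemma.
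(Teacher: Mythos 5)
Your proof is correct, and it is necessarily a different route from the paper's, because the paper gives no argument at all for this lemma: it simply cites \cite{co} (Chan--Oh), where statements of this type are proved for general rank under an anisotropy hypothesis on $L_p$, via the same core observation you use, namely that any $x$ with $Q(x)\equiv 0 \pmod p$ is forced into $\Lambda_p(L)$, so the solution sets of $Q(x)=pn$ in $L$ and in $\Lambda_p(L)$ coincide. What your write-up adds is a self-contained, rank-$2$-specific verification of that forcing: for odd $p$ you identify the zero locus of $\overline{Q}$ on $L/pL$ with the radical of the reduced bilinear form (using that a degenerate form on a $2$-dimensional $\f_p$-space has radical of dimension $1$ or $2$, and that $\overline{Q}$ restricted to a complement of the radical is nonzero), and for $p=2$ you observe that $2\mid dL$ forces $2B(u,v)$ to be even, so the bilinear condition in the definition of $\Lambda_2(L)$ is vacuous. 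This is a genuine gain in generality at rank $2$: when $\ord_p(dL)\ge 2$ the local lattice $L_p$ can be isotropic (e.g.\ $\langle 1,p^2\rangle$ with $-1$ a square modulo $p$), so the generic ``anisotropic Watson lemma'' from the literature does not directly apply, whereas your radical computation handles these cases uniformly. The one point worth stating explicitly in a final version is that both defining conditions of $\Lambda_p(L)$ depend only on $x$ modulo $pL$ (which holds because $Q(x+py)-Q(x)=p\cdot 2B(x,y)+p^2Q(y)\in p\z$), which you implicitly use when passing to $L/pL$; with that noted, the argument is complete.
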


\begin{proof} See \cite {co}. \end{proof}

\begin{lem}\label{lambda}
Let $q$ be a prime and let $L$ be a binary $\z$-lattice such that $L_q$ does not represent $1$.
\begin{itemize}
\item [(i)]If $\ord_q\left(\frac14dL\right) \ge 3$, then $L$ is almost strongly $s$-regular with respect to $P$ if and only if $\lambda_q(L)$ is almost strongly $s$-regular with respect to $P$. Furthermore, if one of them is true, then $m_s(L)=q\cdot m_s(\lambda_q(L))$.

\item [(ii)]If $\ord_q\left(\frac14dL\right)=2$, then $L$ is almost strongly $s$-regular with respect to $P$ if and only if $\lambda_q(L)$ is almost strongly $s$-regular with respect to $P\cup \{q\}$. Furthermore, if one of them is true, then $m_s(L)=q\cdot m_s(\lambda_q(L))$.
\end{itemize}
\end{lem}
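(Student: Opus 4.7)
My plan is to identify a single representation-number identity relating $L$ and $M := \lambda_q(L)$, then push the defining equation of almost strong $s$-regularity back and forth through it.

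\emph{Step 1: Representation constraint and key identity.} Since $L_q$ does not represent $1$, the local classification preceding Lemma \ref{genus} forces $L_q \simeq \langle \Delta_q, \e q^{2t}\rangle$ with $t \ge 1$ when $q$ is odd, and forces one of the four dyadic forms in item (ii) when $q = 2$. A short local computation (modulo $q$ for odd $q$, modulo $4$ or $8$ for $q = 2$) shows that any $x \in L$ with $Q(x) = m^2$ satisfies $x \in qL_q$, whence $q \mid m$; in particular $q \mid dL$. Lemma \ref{aniso} then gives $r(m^2, L) = r(m^2, \Lambda_q(L))$ for every $m$ with $q \mid m$. A direct Gram-matrix calculation shows that in every admissible case the $q$-adic norm generator of $\Lambda_q(L)$ is $q^2$, so $\lambda_q(L)$ is $\Lambda_q(L)$ rescaled by $1/q^2$. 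We obtain
\[
r(m^2, L) = r\!\bigl((m/q)^2, M\bigr) \text{ if } q \mid m, \qquad r(m^2, L) = 0 \text{ if } q \nmid m,
\]
from which $m_s(L) = q \cdot m_s(M)$ is immediate.

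\emph{Step 2: Discriminant and Legendre-symbol bookkeeping.} The same rescaling produces $dM = dL/q^2$. Two consequences drive the rest of the proof. First, $\mathcal{P}(2dM) \subseteq \mathcal{P}(2dL)$, with $q \in \mathcal{P}(2dM)$ exactly in case (i); this is why on the $M$-side one uses $P$ in case (i) but $P \cup \{q\}$ in case (ii), the two choices being arranged precisely so that $q$ is never in the $n_2$-slot. Second, since $dL/dM = q^2$ is a square, $\bigl(\tfrac{-dL}{p}\bigr) = \bigl(\tfrac{-dM}{p}\bigr)$ for every prime $p \ne q$; the binary ($k = 2$) formula for $h_p$ is a sum of powers of the Legendre symbol, so $h_p(dL, \mu_p) = h_p(dM, \mu_p)$ for all $p \ne q$.

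\emph{Step 3: The equivalence.} Given an admissible factorization $n = n_1 n_2$ for $M$ (with respect to $P$ in case (i), or $P \cup \{q\}$ in case (ii)), the factorization $qn = (qn_1) n_2$ is admissible for $L$ with respect to $P$; conversely, any admissible $n = n_1 n_2$ for $L$ with respect to $P$ either has $r(n_1^2, L) = 0$ (whence both sides of the target equation vanish trivially) or satisfies $q \mid n_1$, in which case $n/q = (n_1/q) n_2$ is admissible for $M$ with respect to the corresponding set. The key identity of Step 1 converts both $r(n_1^2 n_2^2, L) \leftrightarrow r\bigl((n_1/q)^2 n_2^2, M\bigr)$ and $r(n_1^2, L) \leftrightarrow r\bigl((n_1/q)^2, M\bigr)$ exactly, and since $q \nmid n_2$ in every case, the matching of $h_p$-factors from Step 2 yields the target equation in both directions.

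The main technical obstacle will be the case $q = 2$, where the local classification at $2$ contains four sub-families and the verifications in Step 1 (namely that $L_2$ not representing $1$ forces $2 \mid m$, and that the $2$-adic norm generator of $\Lambda_2(L)_2$ is $4$) must be carried out in each sub-family separately. After these case checks the remainder of the proof is uniform across all primes.
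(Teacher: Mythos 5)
Your proposal is correct and follows essentially the same route as the paper's proof: the key identity $r(q^2m^2,L)=r(m^2,\lambda_q(L))$ coming from Lemma \ref{aniso} together with the $q^2$-rescaling of $\Lambda_q(L)$, the observation that $r(m^2,L)=0$ whenever $q\nmid m$, the invariance of $h_p$ under the square factor $dL/d\lambda_q(L)=q^2$, and the bookkeeping of $\mathcal P(2dL)$ versus $\mathcal P(2d\lambda_q(L))$ that determines whether $q$ must be adjoined to $P$. The paper writes out only case (ii) and splits the converse on $\ord_q(n_1)\ge 1$ versus $\ord_q(n_1)=0$ exactly as you do, so the two arguments coincide.
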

\begin{proof}
Since the proof is quite similar to each other, we only provide the proof of  the second case.
Note that if $q=2$ and $\ord_2\left(\frac14dL\right)=2$, then $L_2 \simeq \langle 3,4 \rangle$ or $\langle 3,20 \rangle$.
Suppose that $L$ is almost strongly $s$-regular with respect to a finite set of primes $P$. Let $n_1$ and $n_2$ be positive integers such that $n=n_1n_2$, and 
$$
\mathcal P(n_1)\subset \mathcal P(2dL)\cup P,\quad (n_2,2dL\cdot\prod_{p\in P}p)=1.
$$
Then we have
$$
r(q^2n_1^2n_2^2,L)=r(q^2n_1^2,L)\prod_{p\mid n_2}h_p(dL,\mu_p),
$$
where $\mu_p$ and $h_p(dL,\mu_p)$ are defined in Definition \ref{almost}. By Lemma \ref{aniso}, we have
\begin{equation}\label{2}
r(q^2n_1^2n_2^2,L)=r(n_1^2n_2^2,\lambda_q(L)) \quad \text{and} \quad r(q^2n_1^2,L)=r(n_1^2,\lambda_q(L)),
\end{equation}
which implies that 
$$
r(n_1^2n_2^2,\lambda_q(L))=r(n_1^2,\lambda_q(L))\prod_{p\mid n_2}h_p(dL,\mu_p).
$$
Since $\mathcal P(2dL) \cup  P= \mathcal P(2d(\lambda_q(L))) \cup P \cup \{q\}$, the above equation implies that
$\lambda_q(L)$ is almost strongly $s$-regular with respect to $P\cup \{q\}$.

Conversely, suppose that $\lambda_q(L)$ is almost strongly $s$-regular with respect to $P\cup \{q\}$. If $\text{ord}_q(n_1)\ge 1$, then by \eqref{2}, we have
$$
r(n_1^2n_2^2,L)=r(n_1^2,L)\prod_{p\mid n_2}h_p(dL,\mu_p).
$$
If $\text{ord}_q(n_1)=0$, then $r(n_1^2n_2^2,L)=r(n_1^2,L)=0$. Therefore, $L$ is almost strongly $s$-regular with respect to $P$.

Assume that either $L$ or $\lambda_q(L)$ is almost strongly $s$-regular. Note that from the assumption, $m_s(L)$ is divisible by $q$. Since $r(q^2n^2,L)=r(n^2,\lambda_q(L))$ by Lemma \ref{aniso}, we have
$m_s(L)=q\cdot m_s(\lambda_q(L))$.
\end{proof}

\begin{cor}\label{terminal}
Let $L$ be an almost strongly $s$-regular binary $\z$-lattice. Then there is a positive integer $N$ such that $\lambda_{N}(L)$ is almost strongly $s$-regular lattice such that $m_s(\lambda_{N}(L))=1$.
\end{cor}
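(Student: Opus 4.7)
The plan is to iterate the Watson transformation $\lambda_p$ at primes $p$ where the localization $L_p$ fails to represent $1$, using Lemma \ref{lambda} to preserve almost strong $s$-regularity while reducing $m_s(L)$ by a factor of $p$ at each step. Termination is then immediate, since $m_s$ forms a strictly decreasing sequence of positive integers.

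First I would observe that for any almost strongly $s$-regular binary $\z$-lattice $L'$, one has $m_s(L') = 1$ if and only if $L'_p$ represents $1$ for every prime $p$. The forward implication is immediate; for the reverse, if $1 \to L'_p$ at every $p$ (the archimedean condition being trivial), then $1 \to \gen(L')$, so by Lemma \ref{genus} we get $1 \to L'$. Contrapositively, if $m_s(L') > 1$, then there is some prime $p$ with $L'_p$ not representing $1$; inspecting items (ii) and (iv) before Lemma \ref{genus} shows that in this situation any square $n^2 \to L'_p$ forces $p \mid n$, so such a prime $p$ must divide $m_s(L')$.

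The essential point for applying Lemma \ref{lambda} is that whenever $L'_p$ fails to represent $1$, one has $\ord_p\!\left(\tfrac14 dL'\right) \ge 2$. This is read off from the local classification listed before Lemma \ref{genus}: for odd $p$, $L'_p \simeq \langle \Delta_p, \e p^{2t}\rangle$ with $t \ge 1$ gives $\ord_p(dL'/4) = 2t \ge 2$; for $p = 2$, each of the four Jordan types in item (ii) yields $\ord_2(dL'/4) \ge 2$ (namely $2t_1$, $2t_2+1$, $2t_3+2$, and $t_4+2$ respectively, all at least $2$). Hence either case (i) or case (ii) of Lemma \ref{lambda} applies, and consequently $\lambda_p(L')$ is again almost strongly $s$-regular (possibly with respect to an enlarged set of primes) with $m_s(\lambda_p(L')) = m_s(L')/p$.

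Starting from $L_0 = L$ and inductively setting $L_{i+1} = \lambda_{p_{i+1}}(L_i)$ for any prime $p_{i+1}$ such that $(L_i)_{p_{i+1}}$ does not represent $1$ (which exists whenever $m_s(L_i) > 1$), we produce a sequence of almost strongly $s$-regular binary lattices whose invariants satisfy $m_s(L_{i+1}) = m_s(L_i)/p_{i+1}$. The process terminates after finitely many steps at some $L_k$ with $m_s(L_k) = 1$, and setting $N = p_1 p_2 \cdots p_k$ (and using the commutativity of the transformations $\lambda_p$ noted before Lemma \ref{aniso}) gives $\lambda_N(L) = L_k$ with the required properties. The main obstacle is the verification that the local hypothesis $\ord_p(dL/4) \ge 2$ of Lemma \ref{lambda} is maintained at every iteration; but this reduces to the direct inspection of the handful of Jordan types listed before Lemma \ref{genus}, and no further obstruction arises.
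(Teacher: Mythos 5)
Your proof is correct and takes essentially the same route as the paper, whose own proof is the one-line remark that the corollary follows from Lemmas \ref{genus} and \ref{lambda}; you have simply supplied the details left implicit there (the local check that $\ord_p\left(\frac14 dL\right)\ge 2$ whenever $L_p$ fails to represent $1$, and the termination of the iteration via the strict decrease of $m_s$). Nothing further is needed.
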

\begin{proof}
The corollary follows directly from Lemmas \ref{genus} and \ref{lambda}.
\end{proof}

\begin{lem}\label{classnumber1}
If $L$ is an almost strongly $s$-regular binary $\z$-lattice such that $m_s(L)=1$, then $L$ has class number $1$.
\end{lem}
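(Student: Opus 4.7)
Since $m_s(L) = 1$, I choose $v \in L$ with $Q(v) = 1$; for any $w \in L$ the vector $w - 2B(v,w)v$ lies in $L$ and is orthogonal to $v$, so $L \simeq \langle 1, a \rangle$ for some positive integer $a$ and $dL = 4a$. I first reduce the problem to showing that the form class group $\mathfrak G_{-dL}$ is an elementary abelian $2$-group: the corresponding form $f_L(x,y) = x^2 + ay^2$ is primitive and its proper class is the identity of $\mathfrak G_{-dL}$, so by the composition-law description recalled in the proof of Lemma \ref{genus} the proper genus of $f_L$ equals $\mathfrak G_{-dL}^2$. Since the identity class is ambiguous, once $\mathfrak G_{-dL}^2$ is trivial the unique proper class yields a unique lattice isometry class in $\gen(L)$, giving class number $1$.

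To force $\mathfrak G_{-dL}$ to be $2$-torsion, I apply Definition \ref{almost} with $n_1 = 1$ and $n_2 = q$ for a prime $q$ coprime to $2dL \prod_{p\in P} p$. With $k = 2$ and $\chi(q) = \left(\frac{-dL}{q}\right) \in \{\pm 1\}$ this yields
\[
 r(q^2,L) \;=\; r(1,L)\cdot h_q(dL,1) \;=\; w\bigl(1 + \chi(q) + \chi(q)^2\bigr),
\]
where $w = r(1,L) = |\mathcal O^\times|$ for the quadratic order $\mathcal O = \z[\sqrt{-a}]$ of discriminant $-dL$. The key step is to reinterpret $r(q^2,L)$ as a count of principal ideals of $\mathcal O$: integer solutions of $x^2 + ay^2 = q^2$ correspond, up to units, to principal $\mathcal O$-ideals of norm $q^2$. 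If $\chi(q) = -1$ then $q$ is inert, $(q)$ is the only ideal of norm $q^2$, and both sides give $w$ automatically. If $\chi(q) = 1$ then $(q) = \mathfrak q \bar{\mathfrak q}$ and the ideals of norm $q^2$ are $\mathfrak q^2,\,(q),\,\bar{\mathfrak q}^2$; of these $(q)$ is always principal while $\mathfrak q^2$ and $\bar{\mathfrak q}^2$ are principal iff $[\mathfrak q]^2 = [f_L]$ in $\mathfrak G_{-dL}$, i.e., iff $[\mathfrak q]$ has order at most $2$. The required value $3w$ therefore forces $[\mathfrak q]$ to have order $\le 2$ for every split prime $q$ outside the finite excluded set.

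I then invoke the Chebotarev density theorem applied to the ring class field of $\mathcal O$ (equivalently, Dirichlet's theorem on primes in ideal classes of a quadratic order): every class of $\mathfrak G_{-dL}$ contains a degree-one prime ideal whose underlying rational prime avoids any prescribed finite set. Combined with the previous paragraph, every element of $\mathfrak G_{-dL}$ has order $\le 2$, so $\mathfrak G_{-dL}^2 = \{[f_L]\}$, and by the first paragraph $L$ has class number $1$.

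The most delicate point, I expect, is the clean dictionary between $r(q^2,L)$ and principal ideal counts when $-dL$ is not a fundamental discriminant (so $\mathcal O$ may be a non-maximal order of conductor $2$); but since the excluded set contains $2$ and every prime dividing $dL$, only ideals coprime to the conductor appear, and the standard theory of invertible proper $\mathcal O$-ideals applies without modification.
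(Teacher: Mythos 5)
Your overall strategy is sound and genuinely different from the paper's, but the opening reduction contains a concrete error. The vector $w-2B(v,w)v$ does lie in $L$ (since $2B(v,w)\in\z$), but it is \emph{not} orthogonal to $v$: its inner product with $v$ is $B(v,w)-2B(v,w)Q(v)=-B(v,w)$. The true orthogonal projection $w-B(v,w)v$ need not lie in $L$ because the lattice is only non-classic integral, so $B(v,w)$ can be a half-integer. Consequently the claim $L\simeq\langle 1,a\rangle$, $dL=4a$, is false in general: the hexagonal lattice with Gram matrix $\begin{pmatrix}1&1/2\\1/2&1\end{pmatrix}$, i.e.\ $f_L=x^2+xy+y^2$ with $dL=3$, has class number $1$, is strongly $s$-regular with $m_s(L)=1$, and is not diagonal. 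This error is localized and repairable: all your later steps need is that $f_L$ is a primitive form representing $1$ (hence $[f_L]$ is the principal class, and $r(1,L)=|\mathcal O^\times|$ for $\mathcal O$ the order of discriminant $-dL$, which may satisfy $-dL\equiv 1\pmod 4$ and need not be $\z[\sqrt{-a}]$). With that substitution, your ideal-counting argument --- split $q$ outside the excluded set gives three invertible ideals of norm $q^2$, the identity $r(q^2,L)=3w$ forces $\mathfrak q^2$ to be principal, and Chebotarev puts such a $\mathfrak q$ in every class --- correctly shows $\mathfrak G_{-dL}$ is $2$-torsion, hence the genus $\mathfrak G_{-dL}^2$ of $f_L$ is a single class.

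The paper reaches the same conclusion without the ideal dictionary. It uses Hua's classical formula for the \emph{total} representation number $\sum_{[h]\in\mathfrak G_{-dL}}r(q^2,[h])=r(1,[f_L])\bigl(1+\bigl(\tfrac{-dL}{q}\bigr)+\bigl(\tfrac{-dL}{q}\bigr)^2\bigr)$, which by the regularity hypothesis equals $r(q^2,[f_L])$ alone; hence every other class represents $q^2$ zero times. If the class number exceeded $1$, a second class in the genus would have the form $[g]^2$, and choosing a prime $q$ represented by $g$ makes $[g]^2$ represent $q^2$, a contradiction. Both proofs consume the same analytic input (each form/ideal class contains primes avoiding a prescribed finite set); yours yields the slightly stronger statement that the whole class group is elementary abelian of exponent $2$, at the cost of invoking the form--ideal correspondence for non-maximal orders, while the paper's stays entirely within Gauss composition and a single classical counting formula.
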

\begin{proof}
 Since $L$ is almost strongly $s$-regular with respect to a finite set $P$ of primes, for any prime $p$ with $p\notin \mathcal P(2dL) \cup P$, we have 
\begin{equation}\label{3}
r(p^2,L)=r(1,L)\cdot
\left(1+\left(\frac{-dL}{p}\right)+\left(\frac{-dL}{p}\right)^2\right).
\end{equation}
Suppose that $L$ has class number greater than $1$. Let  $L'\in \gen(L)$ be a binary $\z$-lattice that is not isometric to
 $L$.   Fix a proper class $[f_L]$ ($[f_{L'}]$) in $\mathfrak G_{-dL}$ corresponding to the binary $\z$-lattice $L$  ($L'$, respectively).
Since $L$ represent $1$,   we have, by Theorem 3.1 of Chapter $14$ in \cite{ca},
$$
\gen(f_{L})/\sim\,=\{[g]^2:[g]\in \mathfrak G_{-dL}\}.
$$
Hence, there is a proper class $[g] \in \mathfrak G_{-dL}$ such that $[f_{L'}]=[g]^2$. Since $g$ represents infinitely many primes, there is a prime $q$ with $(q,2dL\cdot\prod_{p\in P}p)=1$ that is represented by $g$, that is, $r(q,g)\ne 0$. Then by the property of the composition law, we have 
\begin{equation}\label{4}
r(q^2,[f_{L'}])=r(q^2,[g]^2)\ne 0.
\end{equation}
Now, by Theorem 4.1 of Chapter $12$ in \cite{h} and \eqref{3}, we have 
$$
\displaystyle \sum_{[h]\in \mathfrak G_{-dL}}r(q^2,[h])\displaystyle =r(1,[f_{L}])\left(1+\left(\frac{-dL}{q}\right)+\left(\frac{-dL}{q^2}\right)\right)
=r(q^2,[f_{L}]).
$$
This implies that $r(q^2,[f_{L'}])=0$, which is a contradiction to \eqref{4}.
\end{proof}

\begin{cor}
Let $L$ be a binary $\z$-lattice. Then $L$ is strongly $s$-regular if and only if $L$ is almost strongly $s$-regular.
\end{cor}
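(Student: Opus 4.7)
The plan is to prove both directions. The forward implication is immediate from Corollary \ref{almost1}: a strongly $s$-regular lattice is, by definition, almost strongly $s$-regular with respect to $P = \emptyset$, hence almost strongly $s$-regular.

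For the converse, suppose $L$ is almost strongly $s$-regular with respect to some finite set of primes $P$. The strategy is to reduce via Watson transformations to a lattice that represents $1$, deduce class number one there, and then propagate strong $s$-regularity back up the reduction chain. First, by Corollary \ref{terminal}, there exists a positive integer $N$ such that $L' := \lambda_N(L)$ is almost strongly $s$-regular with $m_s(L') = 1$. Lemma \ref{classnumber1} then forces $L'$ to have class number one, and Lemma \ref{min} gives that any $\z$-lattice of class number one is strongly $s$-regular; hence $L'$ is strongly $s$-regular.

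Now factor $N = q_1 q_2 \cdots q_m$ with repetition, in the order in which the primes were stripped off by iterating Lemma \ref{lambda} in the proof of Corollary \ref{terminal}, and form the intermediate chain $L = L_0, L_1, \ldots, L_m = L'$ with $L_{i+1} = \lambda_{q_{i+1}}(L_i)$. By construction, at every step $q_{i+1}$ divides $m_s(L_i)$, so $(L_i)_{q_{i+1}}$ does not represent $1$, and hence Lemma \ref{lambda} applies (in one of its two cases, according to $\ord_{q_{i+1}}(\frac{1}{4} dL_i)$). I argue by downward induction on $i$ that $L_i$ is strongly $s$-regular. The base case $i = m$ is handled above. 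For the inductive step, assume $L_{i+1}$ is strongly $s$-regular; then by Corollary \ref{almost1} it is almost strongly $s$-regular with respect to $\{q_{i+1}\}$, and the backward direction of the appropriate case of Lemma \ref{lambda} (applied with $P = \emptyset$) yields that $L_i$ is almost strongly $s$-regular with respect to $\emptyset$, i.e., strongly $s$-regular. Taking $i = 0$ completes the proof.

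The argument is essentially a bookkeeping assembly of the preceding lemmas, and I do not anticipate a real obstacle; the only subtlety is checking the hypothesis of Lemma \ref{lambda} at every stage of the chain, which is automatic from the way Corollary \ref{terminal} constructs $N$.
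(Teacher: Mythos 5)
Your proof is correct and follows essentially the same route as the paper: the forward direction via Corollary \ref{almost1}, and the converse by reducing with Corollary \ref{terminal} and Lemma \ref{classnumber1} to a class-number-one lattice (hence strongly $s$-regular by Lemma \ref{min}) and then transporting strong $s$-regularity back along the $\lambda$-chain using Lemma \ref{lambda}. Your write-up merely makes explicit the downward induction and the verification of the hypothesis of Lemma \ref{lambda} at each step, which the paper leaves implicit.
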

\begin{proof}
Note that the ``only if'' is trivial by Lemma \ref{almost1}. Suppose that $L$ is almost strongly $s$-regular.  By Corollary \ref{terminal} and Lemma \ref{classnumber1}, there is a positive integer $N$ such that  $\lambda_{N}(L)$ is of class number $1$ and then strongly $s$-regular. This implies that $L$ is strongly $s$-regular by Lemma \ref{lambda}.
\end{proof}

\begin{thm}
For any positive integer $m$, there are only finitely many isometry classes of strongly $s$-regular binary $\z$-lattices $L$ such that $m_s(L)=m$.
\end{thm}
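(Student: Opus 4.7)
The plan is to reduce, via the Watson transformations, to the class number one case, where finiteness is classical.

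First, I will verify that the set of primes $p$ for which $L_p$ does not represent $1$ coincides with $\mathcal P(m)$, and moreover $\ord_p(dL/4)=2\ord_p(m)$ for each $p\mid m$. The forward implication ($1\nra L_p$ implies $p\mid m$) will use the local classification (iv) from Section 3: writing $L_p\simeq \langle \Delta_p,\e p^{2t}\rangle$, a direct valuation analysis shows no unit-square value is attained, so $p$ must divide every $n$ with $n^2$ represented by $L$. The reverse implication will use Lemma \ref{genus}: if every $L_p$ represented $1$, then the genus of $L$ would represent $1$, and so by Lemma \ref{genus} would $L$ itself, forcing $m_s(L)=1$. The exponent identity then follows from the observation that the smallest $n$ with $n^2$ represented by $L_p$ has $\ord_p(n)=t$.

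Next, I will iteratively apply Lemma \ref{lambda}. The first step guarantees that for each $p\mid m$, both hypotheses of the lemma are met, and each application of $\lambda_p$ divides $m_s$ by $p$ while preserving almost strong $s$-regularity. Applying $\lambda_p$ exactly $\ord_p(m)$ times for each $p\mid m$ produces $M:=\lambda_m(L)$, an almost strongly $s$-regular binary $\z$-lattice with $m_s(M)=1$. Lemma \ref{classnumber1} then yields that $M$ has class number one. By the classical finiteness of positive definite binary $\z$-lattices of class number one, $M$ lies in a fixed finite set $\mathcal F$, independent of $L$ and of $m$.

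To finish, I will use the explicit effect of $\lambda_p$ on the discriminant. From the basis description of $\Lambda_p(L)$ in the anisotropic case, each application of $\lambda_p$ divides $dL$ by $p^2$, so iterating gives $dL=m^2\cdot dM$. Thus $dL$ takes only finitely many values (determined by $m$ and $\mathcal F$), and since there are only finitely many isometry classes of positive definite binary $\z$-lattices of any given discriminant, the theorem follows. The main obstacle I anticipate is the first step: at $p=2$, the classification of $L_2$ in Section 3 splits into several subcases, so the bookkeeping becomes somewhat more intricate, but the same reduction should still go through.
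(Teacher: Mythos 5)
Your proof follows essentially the same route as the paper: reduce to $m_s=1$ by iterated Watson transformations (Corollary \ref{terminal} is exactly your iteration, and Lemma \ref{classnumber1} gives class number one), then invoke the classical finiteness of one-class binary lattices; the paper finishes by counting $\lambda_p$-preimages of a fixed lattice rather than by your discriminant formula $dL=m^2\,dM$, but the two endgames are interchangeable. One small inaccuracy: the exponent identity $\ord_p\left(\frac14 dL\right)=2\ord_p(m)$ fails at $p=2$ in some of the subcases you mention (e.g. $L_2\simeq\langle 3,\e_2 2^{2t+1}\rangle$ has $\ord_2\left(\frac14 dL\right)=2t+1$ odd, while $\ord_2(m)=t$), but this is harmless, since what your argument actually uses is only that each application of $\lambda_p$ divides the discriminant by $p^2$ and $m_s$ by $p$, and that holds in every case where $L_p$ fails to represent $1$.
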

\begin{proof}
Let $L$ be a strongly $s$-regular binary $\z$-lattice with $m_s(L)=m$.  By Corollary \ref{terminal} and Lemma \ref{classnumber1}, there is a positive integer $N$ such that $\lambda_{N}(L)$ has class number $1$. It is well known that there are only finitely many isometry classes of binary $\z$-lattices with class number $1$ (see \cite{ka} and \cite{lk}). Therefore the theorem follows directly from the fact that for any binary $\z$-lattice $K$ and any fixed prime $p$, there are finitely many $\z$-lattices whose $\lambda_p$-transformation is isometric to $K$.
\end{proof}

\begin{rmk}
{\rm (i) In fact, there are infinitely many strongly $s$-regular binary $\z$-lattices. For example, since both $\langle 1,2 \rangle$ and $\langle 2,9 \rangle$ have class number one, $\langle 2, 3^{2t}\rangle$ is strongly $s$-regular for any integer $t\ge 0$ by Lemma \ref{lambda}. 

\noindent (ii) In general, if a strongly $s$-regular binary $\z$-lattice $L$ does not represent $1$, then it is possible that $L$ has class number greater than $1$. For example, in (i), the binary $\z$-lattice $\langle 2, 3^{2t}\rangle$ is strongly $s$-regular with class number greater than $1$ for any integer $t\ge 2$. }
\end{rmk}

\section{Strongly $s$-regular $\z$-lattices with rank greater than two}

In this section, we will consider  strongly $s$-regular $\z$-latices with rank greater than $2$. We prove that for any integer $k\ge 3$, there are only finitely many isometry classes of strongly $s$-regular $\z$-lattices with rank $k$ if the minimum of the nonzero squares that are represented by them is fixed. In fact, this was proved in \cite{ko1} when $k=3$, and in \cite{kkm} when $k=4$.  The method used in \cite {kkm} and \cite {ko1} cannot be extended to the general rank case.  Since our method that we are using in this article can be applied to prove these two cases,  we proceed to our argument under the assumption that $k\ge 3$.   

\begin{lem}\label{genus1}
Any strongly $s$-regular $\z$-lattice represents all squares of integers that are represented by its genus. 
\end{lem}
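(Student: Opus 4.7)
The plan is to argue by contradiction, paralleling the binary proof of Lemma~\ref{genus}, with the composition-law step replaced by a higher-rank representation-theoretic input that is available for $k\ge 3$. Assume for contradiction that $a^2 \ra \gen(L)$ but $a^2 \not\ra L$, and fix a finite set of primes $P$ with respect to which $L$ is almost strongly $s$-regular (the strongly $s$-regular case corresponds to $P=\emptyset$; as in Lemma~\ref{genus}, the ``almost'' version will actually be established). In what follows, $q$ will denote a prime with $(q,2a\cdot dL\cdot\prod_{p\in P}p)=1$.

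The first step is to show that $r(a^2 q^2, L) = 0$ for every such $q$. Write $a = q^t b$ with $(q,b)=1$. Lemma~\ref{tec} applied with $n=b$, $m=q^t$ yields $r(a^2,L) = r(b^2,L)\cdot h_q(dL,t)$. A direct inspection of the defining formulas shows that $h_q(dL,\mu) > 0$ for every $\mu\ge 0$ when $k\ge 3$: when the relevant character value is $+1$ every term in the sum is positive, and when it is $-1$ the sum reduces to a manifestly positive geometric-series expression. Hence $r(a^2,L)=0$ forces $r(b^2,L)=0$, and applying Lemma~\ref{tec} once more with $m=q^{t+1}$ gives $r(a^2 q^2, L) = r(b^2,L)\cdot h_q(dL,t+1) = 0$, as desired.

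To close the contradiction, a prime $q$ of the above type with $a^2 q^2 \ra L$ must be exhibited. Note that $a^2 q^2 \ra \gen(L)$ holds automatically, since $a^2 \ra \gen(L)$ and $q^2\in (\z_p^\times)^2$ for every $p\nmid q$. For $k\ge 5$ this is immediate from Tartakovsky's theorem: every sufficiently large integer in $\gen(L)$ is represented by $L$, so any admissible $q$ chosen large enough suffices. For $k\in\{3,4\}$ the genus typically splits into several spinor genera, and the plan here is to invoke the theory of spinor genera (in the spirit of Kneser, Schulze-Pillot, and Hsia-Shao-Xu): the squares in $\gen(L)$ not represented by $L$ lie in finitely many spinor-exceptional classes, each cut out by explicit local conditions at primes dividing $2dL$, and a Chebotarev-type density argument then produces primes $q$ for which $a^2 q^2$ avoids every such class and is therefore represented by $L$. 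The main obstacle is this last step in the ternary case $k=3$, where spinor exceptions really do occur; the crux will be to pin down the congruence conditions on $q$ that place $a^2 q^2$ outside every spinor-exceptional class of $L$ while simultaneously ensuring coprimality of $q$ with $2a\cdot dL\cdot\prod_{p\in P}p$.
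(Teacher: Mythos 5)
Your first step and your treatment of $k\ge 5$ are sound and essentially match the paper: once $r(a^2,L)=0$, the positivity of $h_q(dL,\mu)$ (which does hold for all $k\ge 3$) propagates the vanishing to $r(a^2q^2,L)=0$ for every admissible $q$, and for $k\ge 5$ a Tartakowsky-type asymptotic local-global theorem produces a large admissible $q$ with $a^2q^2\ra L$, giving the contradiction. (The paper runs the same two-step scheme, quoting Hanke's effective Theorem 6.4 in place of Tartakowsky.)

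The genuine gap is exactly where you flag it: for $k=3$ (and to a lesser extent $k=4$) you only describe a plan. For ternary lattices the integers $a^2q^2$ all lie in the single square class $\z^2$, which is precisely the kind of square class that can be spinor-exceptional for $\gen(L)$; deciding whether $a^2q^2$ is represented by the \emph{spinor genus} of $L$, and then passing from the spinor genus to $L$ itself via Duke--Schulze-Pillot (with the bounded-divisibility caveat at anisotropic primes), requires relative spinor norm computations that you have not carried out, and it is not clear a priori that a Chebotarev-type choice of $q$ can always escape the exceptional classes while keeping $q$ coprime to $2dL\cdot\prod_{p\in P}p$. The paper avoids all of this with a much more elementary device: by the Class Linkage Lemma of Hsia--J\"ochner--Shao there is a prime $q\nmid 2dL$ with $qL'\subseteq L$ for the lattice $L'\in\gen(L)$ representing $a^2$, whence $r(q^2a^2,L)\ne 0$ immediately. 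For $k=4$, note also that spinor exceptions are not the issue (every integer represented by the genus of a quaternary lattice is represented by each spinor genus); the obstruction there is high divisibility at anisotropic primes, which is harmless for $a^2q^2$ with $q\nmid 2dL$ and is handled by Hanke's Theorem 6.3. I would recommend replacing your spinor-genus plan for $k=3$ by the linkage argument, or else actually supplying the spinor norm analysis; as written, the ternary case is not proved.
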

\begin{proof}
Let $L$ be a strongly $s$-regular $\z$-lattice quadratic form of rank $k\ge 3$. 
Suppose that there is an integer $a$ such that $a^2$ is represented by the genus of $L$, but  is not represented by $L$ itself. Then for any prime $p\nmid 2dL$ and any integer $s\ge 1$, we have
$$
r(p^{2s}a^2,L)=r(p^{2s+2t}b^2,L)=r(b^2,L)h_p(dL,s+t)=0,
$$
where $a=p^t\cdot b$ for some integer $b$ with $(b,p)=1$ and $h_p$ is defined in Definition \ref{almost}.
By repeating this for any prime $q$ such that $(q,2dL)=1$, we may conclude that
\begin{equation}\label{e1}
r(n^2a^2,L)=0,
\end{equation}
for any integer $n$ with $(n,2dL)=1$.

Since we are assuming that $a^2$ is represented by the genus of $L$, there is a $\z$-lattice $L'\in \gen(L)$ such that $r(a^2,L')\ne0$.  If $k=3$, then the Class Linkage  Lemma in \cite {hjs} says that there is a prime $q\nmid 2dL$ such that $qL' \subseteq L$. Hence $q^2a^2$ is represented by $L$, that is, $r(q^2a^2,L)\ne 0$.  This is a contradiction to \eqref{e1}. Assume that $k\ge 4$. Since $a^2$ is represented by the genus of $L$, there exists a sufficiently large integer $m$ with $(m,2dL)=1$ such that
$$
r(m^2a^2,L)\ne 0,
$$
by Theorem 6.3  for the case when $k=4$, and Theorem 6.4  for the case when $k\ge 5$ of \cite{ha}. This is a contradiction to  \eqref{e1}.
\end{proof}

\begin{cor}
Let $L$ be a strongly $s$-regular $\z$-lattice. Then any integer $m$ such that $m^2$ is represented by $L$ is a multiple of
$$
m_s(L)=\text{min}_{n\in \z^{+}}\{n:r(n^2,L)\ne 0\}.
$$
\end{cor}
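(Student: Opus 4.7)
The plan is to mirror the proof of the analogous binary corollary that followed Lemma \ref{genus}, using Lemma \ref{genus1} to reduce an integral representation question to a purely local one. For any $m$ with $r(m^2,L)\ne 0$, I need $\ord_p(m_s(L))\le \ord_p(m)$ for every prime $p$. Since $r(m^2,L)\ne 0$ forces $r(m^2,L_p)\ne 0$ for every $p$, it suffices to give a purely local description of $\ord_p(m_s(L))$.

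The key step is to introduce, for every prime $p$, the local minimum
$$
e_p := \min\{\ord_p(n) : r(n^2,L_p)\ne 0\}.
$$
I would first observe two local facts. For every odd prime $p\nmid 2dL$, the lattice $L_p$ is unimodular of rank $k\ge 3$, hence represents every unit, so $e_p=0$; consequently all but finitely many $e_p$ vanish and $n_0:=\prod_p p^{e_p}$ is a finite product. Second, if $L_q$ represents $q^{2e_q}u_0^2$ for some $u_0\in\z_q^\times$, then scaling a representing vector by any $u/u_0\in\z_q^\times$ shows $L_q$ also represents $q^{2e_q}u^2$ for every $u\in\z_q^\times$. Taking $u$ so that $q^{2e_q}u^2=n_0^2$ in $\z_q$, this yields $n_0^2\to L_q$ at every prime $q$, i.e.\ $n_0^2\to \gen(L)$.

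By Lemma \ref{genus1} it follows that $r(n_0^2,L)\ne 0$, and hence $m_s(L)\le n_0$. On the other hand, since $r(m_s(L)^2,L_p)\ne 0$ for every $p$, the definition of $e_p$ gives $\ord_p(m_s(L))\ge e_p$ for all $p$, so $n_0\mid m_s(L)$. Combined, $m_s(L)=n_0=\prod_p p^{e_p}$. For an arbitrary $m$ with $r(m^2,L)\ne 0$, the same inequality $\ord_p(m)\ge e_p=\ord_p(m_s(L))$ holds at every prime, so $m_s(L)\mid m$.

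I do not anticipate a real obstacle: the whole argument is a direct local-global consequence of Lemma \ref{genus1}. The only points that need a line of justification are the unit-scaling step (to ensure that the local witness at each $q$ actually hits the specific unit occurring in $n_0^2$) and the cofinite-support observation that makes $n_0$ a well-defined integer.
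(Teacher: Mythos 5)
Your proposal is correct and follows the same route the paper intends: Lemma \ref{genus1} reduces representation of squares by $L$ to representation by the genus, which is a purely local condition, and the local exponents $e_p$ (finite, almost all zero, insensitive to unit squares by the scaling argument) show that the set of $n$ with $r(n^2,L)\ne 0$ consists exactly of the multiples of $n_0=\prod_p p^{e_p}=m_s(L)$. The paper leaves all of this implicit ("follows directly from Lemma \ref{genus1}"), and your write-up supplies precisely the omitted details, including the two points genuinely needing justification.
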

\begin{proof}
The corollary follows directly from Lemma \ref{genus1}.
\end{proof}

Let $L$ be a $\z$-lattice of rank $k$. For any positive integer $j\le k$, the $j$-th successive minimum of $L$ will be denoted by $\mu_j(L)$. For the definition of the successive minima, see Chapter 12 of \cite{ca}. It is well-known that $dL\le 2^k \mu_1(L)\mu_2(L)\cdots \mu_k(L)$ (for the proof, see Proposition 2.3 of \cite{ea}).

\begin{lem}\label{bound}
Let $L$ be a $\z$-lattice of rank $k$ and let $n$ be a positive integer. Then we have
$$
r(n,L)\le \left( \frac{2^{\frac{3k-1}{2}}\gamma_k^{\frac 12}}{dL^{\frac{k-1}{2k}}}\right)\cdot n^{\frac{k-1}{2}}+\sum_{i=1}^{k-1}t_i(k) n^{\frac{k-1-i}2},
$$
where  $t_i(k)=2^{k}\binom{k-1}{i}C_5(k)^{\frac{k-1-i}2}$  for any $i=1,2,\dots,k-1$. Here, $C_5(k)$ is an absolute constant depending only on $k$, which is defined in Theorem 3.1 of Chapter 12 in \cite{ca}.
\end{lem}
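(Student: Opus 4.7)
The plan is to bound $r(n,L)$ by counting the integer tuples $(c_1,\dots,c_k)$ that parametrize representations of $n$ in a suitable basis. I would fix a Minkowski-reduced basis $v_1,\dots,v_k$ of $L$ realizing the successive minima, so that $Q(v_i)=\mu_i(L)$ with $\mu_1(L)\le\cdots\le\mu_k(L)$, and write every $x\in L$ with $Q(x)=n$ uniquely as $x=c_1v_1+\cdots+c_kv_k$.

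Two ingredients drive the estimate. First, the classical theory of Minkowski-reduced bases (specifically Theorem 3.1 of Chapter 12 in \cite{ca}, which defines the constant $C_5(k)$) provides a coordinate bound of the form $|c_i|\le C_5(k)^{1/2}\sqrt{n/\mu_i(L)}$ for every $i$. Second, once $c_2,\dots,c_k$ are fixed, $Q(x)=n$ becomes a quadratic equation in $c_1$ with positive leading coefficient $\mu_1(L)>0$, so it admits at most two integer solutions. Combining these yields
$$
r(n,L)\;\le\;2\prod_{i=2}^{k}\bigl(2C_5(k)^{1/2}\sqrt{n/\mu_i(L)}+1\bigr),
$$
and I would then expand the right-hand side as a polynomial in $n^{1/2}$, one contribution per subset $S\subseteq\{2,\dots,k\}$: the top-degree piece coming from $S=\{2,\dots,k\}$, and each subterm of degree $n^{(k-1-i)/2}$ coming from a subset of size $k-1-i$.

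To handle the top-degree piece, I would apply the lower bound $\mu_1(L)\cdots\mu_k(L)\ge dL/2^k$ (Proposition~2.3 of \cite{ea}) together with Hermite's inequality $\mu_1(L)\le \gamma_k(dL)^{1/k}/2$---the factor $1/2$ arises because the convention $M_L=(2B(x_i,x_j))$ makes the Euclidean covolume of $L$ equal $\sqrt{dL/2^k}$---to deduce
$$
\mu_2(L)\cdots\mu_k(L)\;\ge\;\frac{(dL)^{(k-1)/k}}{2^{k-1}\gamma_k}.
$$
A slightly sharper form of the coordinate bound (obtained via Gram--Schmidt, which avoids the extra $C_5(k)^{(k-1)/2}$ factor at leading order) then delivers exactly $2^{(3k-1)/2}\gamma_k^{1/2}(dL)^{-(k-1)/(2k)}\,n^{(k-1)/2}$. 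For each lower-degree term, the trivial bound $\mu_j(L)\ge 1$ (valid since $L$ is non-classic integral and positive definite, forcing $Q(v)\in\z_{>0}$ for $v\ne 0$) discards the remaining $\mu$-factors, and counting the $\binom{k-1}{i}$ subsets of size $k-1-i$ yields precisely $t_i(k)=2^k\binom{k-1}{i}C_5(k)^{(k-1-i)/2}$. The only genuinely technical step is the Minkowski-reduction coordinate bound, which is not elementary but is exactly \cite{ca}, Chapter 12, Theorem 3.1; once it is in hand, the remainder is a straightforward expansion together with the classical Minkowski and Hermite inequalities.
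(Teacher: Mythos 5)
Your proposal is correct and follows essentially the same route as the paper: Minkowski reduction, at most two choices of $x_1$ once the other coordinates are fixed, interval counts for $x_2,\dots,x_k$, a binomial expansion over subsets, Hermite's inequality (with the same factor of $2$ from the convention $M_L=(2B(x_i,x_j))$) for the top term, and the $C_5(k)$ comparison from Theorem 3.1 of Chapter 12 of \cite{ca} for the lower-order terms. The ``slightly sharper coordinate bound via Gram--Schmidt'' that you invoke to remove the spurious $C_5(k)^{(k-1)/2}$ from the leading term is exactly what the paper does: it completes the square, bounds $(x_i+u_i)^2\le n/h_i$, and uses the exact identity $h_2\cdots h_k=dL/(2^kh_1)$, reserving the $C_5(k)$ comparison for the subleading terms only.
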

\begin{proof}
Let $L$ be a $\z$-lattice of rank $k$ and let $f_L$ be the quadratic form corresponding to $L$. Let 
$$
\begin{array}{ll}
f_L(x_1, \cdots, x_k)&\displaystyle=\sum_{1\le i,j\le k}a_{ij}x_ix_j \\[0.15cm]
                     &=h_1(x_1+c_{12}x_2+\cdots + c_{1k}x_k)^2 \\[0.15cm]
                     &\hspace{0.4cm}+h_2(x_2+c_{23}x_3+\cdots+c_{2k}x_k)^2\\[0.15cm]
                     &\hspace{0.4cm}+\cdots +h_kx_k^2
\end{array}
$$
be Minkowski-reduced with successive minima $\mu_1(L),\dots, \mu_k(L)$. Here, one may easily check that $dL=2^kh_1h_2\cdots h_k$. By Theorem 3.1 of Chapter 12 in \cite{ca}, there is an absolute constant $C_5(k)$ depending only on $k$ such that for any $i$ with $1\le i \le k$,
\begin{equation}\label{c5}
\frac{1}{C_5(k)}  \le  \frac{\mu_i(L)}{C_5(k)} \le h_i.
\end{equation}

Now, assume that $f_L(x_1, \cdots, x_k)=n$. Then, for any $i$ with $2\le i \le k$, we have
$$
(x_i+u_i)^2\le \frac{n}{h_i},
$$
where $u_i=c_{ii+1}x_{i+1}+\cdots+c_{ik}x_k$ for any $i$ with $2\le i\le k-1$ and $u_k=0$. This implies that
$$
-\sqrt{\frac{n}{h_i}} \le x_i+u_i \le \sqrt{\frac{n}{h_i}}.
$$
Since $x_i$ is an integer, we have for any $i$ with $2\le i \le k$,
\begin{equation}\label{bound1}
-\left[\sqrt{\frac{n}{h_i}}+u_i\right] \le x_i \le \left[\sqrt{\frac{n}{h_i}}-u_i\right],
\end{equation}
where $[x]$ is the greatest integer not exceeding $x$. Note that
$$
\left[\sqrt{\frac{n}{h_i}}-u_i\right]+\left[\sqrt{\frac{n}{h_i}}+u_i\right]\le 2\left[\sqrt{\frac{n}{h_i}}\right]+1,
$$
for any $i$ with $2\le i \le k$.
If the variables $x_2, x_3,\cdots, x_k$ are determined, then the number of possible $x_1$'s satisfying $f_L(x_1, \cdots, x_k)=n$ is at most $2$. Hence by \eqref{c5} and \eqref{bound1}, we have
$$
\begin{array}{ll}
r(n,L)=r(n,f_L) &\!\! \displaystyle \le 2\cdot\left(2\left[\sqrt{\frac{n}{h_2}}\right]+2\right) \cdots \left(2\left[\sqrt{\frac{n}{h_{k-1}}}\right]+2\right) \left(2\left[\sqrt{\frac{n}{h_k}}\right]+2\right)\\[0.4cm]

                &\!\! \displaystyle \le\frac{2^{k}\cdot n^{\frac{k-1}2}}{\sqrt{h_2\cdots h_{k-1}h_k}}+\sum_{i=1}^{k-1}2^{k}\binom{k-1}{i}C_5(k)^{\frac{k-1-i}2} \cdot n^{\frac{k-1-i}2}.
\end{array}
$$
Now, by Theorem 2.1 of Chapter 12 in \cite{ca}, we have
$$
\frac{1}{\sqrt{h_2\cdots h_{k-1}h_k}}=\frac{2^{\frac k2}\cdot \sqrt{h_1}}{\sqrt{dL}}\le \frac{2^{\frac{k-1}{2}}\gamma_k^{\frac 12} dL^{\frac{1}{2k}}}{\sqrt{dL}}=\frac{2^{\frac{k-1}{2}}\gamma_k^{\frac 12}}{dL^{(\frac12-\frac{1}{2k})}},
$$
where $\gamma_k$ is the Hermite's constant depending only on $k$. Therefore, we have
$$
r(n,L)\le \left(\frac{2^{\frac{3k-1}{2}}\gamma_k^{\frac 12}}{dL^{\frac{k-1}{2k}}}\right)\cdot n^{\frac{k-1}{2}}+\sum_{i=1}^{k-1}t_i(k) n^{\frac{k-1-i}2},
$$
where $t_i(k)=2^{k}\binom{k-1}{i}C_5(k)^{\frac{k-1-i}2}$ is an absolute constant depending only on $k$. This completes the proof.
\end{proof}

\begin{lem} \label{gen-tec} Let $M_0, N_0$ and $k$ be positive integers. Let $d$ be a positive integer satisfying the following property: for any prime $p \ge M_0$ not dividing $d$, $d<N_0\cdot p^{2k}$. Then there is an absolute constant depending only on $M_0, N_0$ and $k$  such that
$$
d<C_0(k,M_0,N_0).
$$
\end{lem}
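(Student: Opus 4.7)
The plan is to proceed by contradiction: I would assume $d$ can be made arbitrarily large while still satisfying the hypothesis, and derive a contradiction from the fact that the primorial grows much faster than any fixed power of its argument. Concretely, set $X := (d/N_0)^{1/(2k)}$. The hypothesis, rewritten, says that every prime $p \ge M_0$ with $p \nmid d$ must satisfy $p > X$. Taking the contrapositive, \emph{every} prime $p$ lying in the interval $[M_0, X]$ must divide $d$. Therefore
$$d \;\ge\; \prod_{\substack{p\text{ prime}\\ M_0 \le p \le X}} p \;=\; \exp\bigl(\theta(X) - \theta(M_0 - 1)\bigr),$$
where $\theta(x) = \sum_{p \le x}\log p$ is the first Chebyshev function.

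By Chebyshev's theorem there exist absolute constants $c > 0$ and $X_0 \ge 1$ with $\theta(X) \ge c\,X$ for all $X \ge X_0$. If $X < X_0$ then $d = N_0\,X^{2k} < N_0\,X_0^{2k}$, so $d$ is already bounded in terms of $k$ and $N_0$; hence we may assume $X \ge X_0$. Taking logarithms in the displayed inequality yields
$$\log d \;\ge\; c\,X - \theta(M_0 - 1) \;=\; c\cdot(d/N_0)^{1/(2k)} - \theta(M_0 - 1).$$
The right-hand side grows like a positive power of $d$, whereas the left-hand side grows only logarithmically, so this inequality fails once $d$ exceeds some threshold depending only on $c$, $k$, $N_0$ and $M_0$. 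Taking $C_0(k, M_0, N_0)$ to be the maximum of that threshold and $N_0\,X_0^{2k}$ completes the proof.

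The whole argument is essentially a one-line contradiction once the contrapositive reformulation is made, and the only nontrivial input is the lower Chebyshev bound on $\theta(x)$, which is classical. I do not foresee any real obstacle; the subtleties lie only in bookkeeping the two regimes $X < X_0$ versus $X \ge X_0$ cleanly, and in noting that the estimate depends on $k$ through the exponent $1/(2k)$ but does not otherwise interact with the hypothesis.
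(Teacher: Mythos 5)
Your proof is correct and follows essentially the same strategy as the paper's: both observe that every prime $p$ with $M_0 \le p \le (d/N_0)^{1/(2k)}$ must divide $d$, and then derive a contradiction from the fact that the product of those primes outgrows $N_0\,p^{2k}$. The only difference is the classical input — you invoke Chebyshev's lower bound $\theta(x)\ge c\,x$ and argue by growth rates, whereas the paper chains the Bertrand--Chebyshev theorem to produce an explicit index $r_0$ and hence the explicit constant $C_0(k,M_0,N_0)=N_0\,q_{r_0}^{2k}$ — but this is a cosmetic distinction.
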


\begin{proof}  Let $q_r$ be the $r$-th  smallest prime greater than or equal to $M_0$ so that $M_0 \le q_1<q_2<q_3<\cdots$. Let $r_0\ge2k+1$ be an integer such that  
$$
q_{r_0-2k}>2^{k(2k+1)}\cdot N_0.
$$ 
Then, by  Bertrand-Chebyshev theorem, we have
$$
q_1q_2\cdots q_{r_0}>N_0\cdot q_{r_0+1}^{2k}.
$$ 
Furthermore, we also have $q_1q_2\cdots q_{r}>N_0\cdot q_{r+1}^{2k}$ for any $r\ge r_0$.  

Let $s$ be the smallest integer such that $q_s$ does not divide $d$. If $s > r_0$, then we have 
$$
d>q_1q_2\cdots q_{s-1}>N_0\cdot q_s^{2k},
$$ 
which is a contradiction to the assumption.  Therefore $s \le r_0$ and 
$$
d<N_0\cdot q_{r_0}^{2k}=:C_0(k,M_0,N_0).
$$
This completes the proof.  \end{proof}

\begin{thm} \label{main}
Let $k$ be an integer greater than $2$. 
For any positive integer $m$, there are only finitely many isometry classes of strongly $s$-regular $\z$-lattices $L$ of rank $k$  such that $m_s(L)=m$.
\end{thm}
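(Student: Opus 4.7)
The plan is to prove that the discriminant $dL$ of any strongly $s$-regular rank-$k$ $\z$-lattice $L$ with $m_s(L)=m$ is bounded by an absolute constant depending only on $k$ and $m$, after which the theorem follows from the classical finiteness of isometry classes of positive-definite $\z$-lattices of bounded rank and bounded discriminant (Hermite's reduction theorem), together with the trivial bound $\mu_1(L)\le m^2$ coming from $r(m^2,L)\ne 0$.

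To produce the absolute bound I pit the strong $s$-regularity identity against Lemma \ref{bound}. For every prime $p\ge M_0(k)$ with $p\nmid 2dL$, Definition \ref{almost} applied with $n_1=m$, $n_2=p$ gives
\[
r(m^2p^2,L)\;=\;r(m^2,L)\,h_p(dL,1)\;\ge\;h_p(dL,1)\;\ge\;\tfrac12\,p^{k-2},
\]
the last inequality being an elementary consequence of the closed form of $h_p(dL,1)$ in Definition \ref{almost} once $M_0(k)$ is taken large enough to dominate the subleading terms $p^{(k-3)/2}$ (odd $k$) or $p^{(k-2)/2}$ (even $k$). Comparing with the upper bound of Lemma \ref{bound} applied to $n=m^2p^2$, and absorbing the $k-1$ error terms for $p$ sufficiently large in terms of $k$ and $m$, one arrives at an inequality of the form
\[
dL\;\le\;N_0(k,m)\cdot p^{2k/(k-1)}\;<\;N_0(k,m)\cdot p^{2k}
\]
valid for every prime $p\ge M_1(k,m)\ge M_0(k)$ not dividing $dL$. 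Lemma \ref{gen-tec}, applied with the triple $(k,M_1,N_0)$, then produces the desired absolute bound $dL<C_0(k,M_1,N_0)$, which completes the argument.

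The main technical obstacle lies in the absorption step in the middle of the argument. The $i=1$ error term $t_1(k)\,m^{k-2}\,p^{k-2}$ of Lemma \ref{bound} has exactly the same order in $p$ as the lower bound $\tfrac12\,p^{k-2}$ derived from $h_p(dL,1)$, so a naive subtraction succeeds only when $t_1(k)m^{k-2}<\tfrac12$, which fails uniformly in $m$. The fix is to exploit the structural constraint $\mu_1(L)\le m^2$: in a Minkowski-reduced basis of $L$ with diagonal coefficients $h_1\le\cdots\le h_k$, one has $h_1h_2\cdots h_k=dL/2^k$ and $h_1\ll m^2$, which forces at least one $h_j$ with $j\ge 2$ to satisfy $h_j\gg(dL/m^2)^{1/(k-1)}$. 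Inserting this lower bound into the counting step in the proof of Lemma \ref{bound} shrinks the effective $t_1(k)$-coefficient to a quantity that tends to $0$ as $dL\to\infty$, so that the $i=1$ error can indeed be absorbed against $\tfrac12\,p^{k-2}$ whenever $dL$ is sufficiently large. This refinement is precisely what delivers the polynomial bound $dL\le N_0(k,m)\,p^{2k/(k-1)}$ and brings the proof to a close.
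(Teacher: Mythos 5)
Your overall strategy---forcing an inequality of the shape $dL<N_0(k,m)\,p^{2k}$ for every large prime $p\nmid 2dL$ by playing the lower bound $r(m^2p^2,L)=r(m^2,L)\,h_p(dL,1)\gg p^{k-2}$ against a counting upper bound, and then invoking Lemma \ref{gen-tec}---is the same as the paper's, and the final appeal to Lemma \ref{gen-tec} and to reduction theory is exactly right. The gap is in the absorption step, and the fix you propose does not repair it. The $i=1$ error term of Lemma \ref{bound} is, before being crudely bounded by $t_1(k)n^{(k-2)/2}$, the quantity
$$
2^k n^{\frac{k-2}{2}}\sum_{j=2}^{k}\Bigl(\textstyle\prod_{2\le i\le k,\ i\ne j}h_i\Bigr)^{-1/2},
\qquad\text{where}\quad \prod_{2\le i\le k,\ i\ne j}h_i=\frac{dL}{2^k h_1h_j}.
$$
Your constraint $h_1\ll_k m^2$ does force some $h_j$ with $j\ge2$ to be at least of order $(dL/m^2)^{1/(k-1)}$, but the summand you must control is precisely the one in which that largest $h_j$ is \emph{omitted}: for $j=j_{\max}$ the remaining product equals $dL/(2^kh_1h_{j_{\max}})$, which need not grow with $dL$ at all, since all of the growth of $dL$ can be concentrated in $h_{j_{\max}}$ itself. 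Concretely, for $f_L=x_1^2+\cdots+x_{k-1}^2+Nx_k^2$ one has $h_1=\cdots=h_{k-1}=1$, $h_k=N$, $dL=2^kN\to\infty$, and the $j=k$ summand is $2^kn^{(k-2)/2}=2^km^{k-2}p^{k-2}\ge 8\,p^{k-2}$, which already dwarfs your lower bound $\tfrac12 p^{k-2}$ (and the sharper $2(p^{k-2}-p^{(k-2)/2}+1)$) for every $p$, no matter how large $dL$ is. So the effective coefficient does not tend to $0$ as $dL\to\infty$, your inequality is satisfied vacuously, and no bound on $dL$ follows. (Whether such an $L$ is actually strongly $s$-regular is beside the point; your chain of inequalities cannot exclude it.)

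This is not a bookkeeping issue: the offending term is exactly the contribution of representations lying in a rank-$(k-1)$ section of $L$, and pushing that contribution strictly below $r(m^2,L)h_p(dL,1)\ge 2p^{k-2}-\cdots$ requires non-elementary input. The paper supplies it by introducing the sublattice $L(k-1)=(\q y_1+\cdots+\q y_{k-1})\cap L$ spanned by the first $k-1$ minima and splitting into two cases: if $d(L(k-1))$ is bounded in terms of $k$ and $m$, the modular-form estimate $r(n,L(k-1))=O(n^{e_k})$ with $2e_k<k-2$ makes the sublattice contribution $o(p^{k-2})$; if $d(L(k-1))$ is large, Lemma \ref{bound} applied to the rank-$(k-1)$ lattice $L(k-1)$ has leading coefficient at most $1$, and the comparison $1<2$ closes the case. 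Either way $r(m^2p^2,L(k-1))<r(m^2p^2,L)$, so some representation of $m^2p^2$ falls outside $L(k-1)$, whence $\mu_k(L)\le m^2p^2$ and $dL\le 2^km^{2k}p^{2k}$. Your proposal contains neither the sublattice comparison nor the modular-form input, and without at least one of these the bound on $dL$ cannot be reached.
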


\begin{proof}
Let $L$  be a strongly $s$-regular $\z$-lattice of rank $k$ such that $m_s(L)=m$. 
Let $y_i \in L$ be a vector such that $Q(y_i)=\mu_i(L)$, where $\mu_i(L)$ is the $i$-th successive minimum of $L$. Define a sublattice $L(k-1)=(\q y_1+\q y_2+\dots+\q y_{k-1}) \cap L$
of $L$.
Since $r(n,L(k-1))$ is the Fourier coefficient of a modular form of weight $\frac{k-1}{2}$ , it is well known 
that (see, for example, \cite{lo})
$$
r(n,L(k-1))\in
\begin{cases}
O\left(n^{\frac34}\right) \ \ \text{if $k=4$},\\[0.2cm]
 O\left(n^{\frac{k-3}{2}+\e}\right) \ \ \text{otherwise},
\end{cases}
$$
for any $\e>0$. Hence, if $d(L(k-1)) < 2^{\frac{(3k-4)(k-1)}{k-2}} \cdot \gamma_{k-1}^{\frac{k-1}{k-2}} \cdot m^{2(k-1)}$, then there exists an absolute constant $C_{k,m}$ depending only on $k$ and $m$ such that 
$$
r(n,L(k-1)) \le C_{k,m} \cdot n^{e_k}
$$
for any positive integer $n$, where $e_k=\frac 34$ if $k=4$, $e_k=\frac{2k-5}{4}$ otherwise.        
We define
$$
M_0\!=\!
\text{min} \left\{\!\!\!\!\begin{array}{l|l} & 2(M^{k-2}-M^{\frac{k-2}{2}}+1) > M^{k-2}+\sum_{i=1}^{k-2}t_i(k-1) (mM)^{k-2-i} \\[-0.23cm]
M\in \mathbb N &\\[-0.23cm]
&    \text{and}\quad 2(M^{k-2}-M^{\frac{k-2}{2}}+1) > C_{k,m} \cdot m^{2e_k}\cdot M^{2e_k}\end{array} \!\!\!\right\},
$$
where $t_i(k-1)=2^{k-1}\binom{k-2}{i}C_5(k-1)^{\frac{k-2-i}2}$ is an absolute constant depending only on $k$. Note that $M_0$ is an absolute constant depending only on $k$ and $m$.

Let $p$ be any prime greater than or equal to $M_0$ such that $p$ does not divide $2dL$. 
Since $L$ is strongly $s$-regular, we have
$$
r(m^2p^2,L)=r(m^2,L)h_{p}(dL,1),
$$
where 
$$
h_{p}(dL,1)=
\begin{cases}
\displaystyle p^{k-2}+\left(\frac{(-1)^{\frac k2}dL}{p}\right)p^{\frac{k-2}{2}}+1 \ \ \text{if $k$ is even,}\\
\displaystyle p^{k-2}+1-p^{\frac{k-3}{2}}\left(\frac{(-1)^{\frac {k-1}{2}}dL}{p}\right) \ \ \text{otherwise}.
\end{cases}
$$
Therefore we have 
$$
r(m^2p^2,L)=r(m^2,L)h_{p}(dL,1) \ge 2(p^{k-2}-p^{\frac{k-2}{2}}+1).
$$
On the other hand, if $d(L(k-1)) < 2^{\frac{(3k-4)(k-1)}{k-2}} \cdot \gamma_{k-1}^{\frac{k-1}{k-2}} \cdot m^{2(k-1)}$, then we have
$$
r(m^2p^2,L(k-1)) \le  C_{k,m} \cdot m^{2e_k}\cdot p^{2e_k},
$$
and
if $d(L(k-1)) \ge 2^{\frac{(3k-4)(k-1)}{k-2}} \cdot \gamma_{k-1}^{\frac{k-1}{k-2}} \cdot m^{2(k-1)}$, then by Lemma \ref{bound}, we have
$$
r(m^2p^2,L(k-1)) \le p^{k-2}+\sum_{i=1}^{k-2}t_i(k-1) (mp)^{k-2-i}.
$$
Since $p$ is greater than or equal to $M_0$, we have
$$
r(m^2p^2,L(k-1)) < r(m^2p^2,L).
$$
This implies that 
$$
\mu_k(L) \le m^2p^2 \quad  \text{and}   \quad dL \le 2^km^{2k}p^{2k}.
$$ 
Now, the theorem follows directly from Lemma \ref{gen-tec}.  \end{proof}

\end{document}